\newcommand{\RR}{\mathbb{R}}
\newcommand{\CC}{\mathbb{C}}
\newcommand{\NN}{\mathbb{N}}
\newcommand{\Bo}{\mathcal{B}}
\newcommand{\Oo}{\mathcal{O}}
\newcommand{\dist}{ {\rm dist} }
\newcommand{\nint}[1]{\rlap{~~~~\;---}\int\limits_{#1}}
\newcommand{\rint}[1]{\rlap{\,\,\,\,\,\,\,~~~~~~~~\,\;---}\int\limits_{#1}}
\newcommand{\RE}{ {\rm Re \,} }
\newtheorem{Th}{Theorem}
\newtheorem{Lem}{Lemma}
\newtheorem{Cor}{Corollary}
\newtheorem{Prop}{Proposition}
\theoremstyle{definition}
\newtheorem{Df}{Definition}
\theoremstyle{remark}
\newtheorem{Rem}{Remark}
\newtheorem{Ex}{Example}
\begin{document}
\title[Summable solutions of some PDEs]{Summable solutions of some partial differential equations and generalised integral means}
\author{S{\l}awomir Michalik}
\address{Faculty of Mathematics and Natural Sciences,
College of Science\\
Cardinal Stefan Wyszy\'nski University\\
W\'oycickiego 1/3,
01-938 Warszawa, Poland}
\email{s.michalik@uksw.edu.pl}
\urladdr{www.impan.pl/~slawek}
\subjclass[2010]{35B05, 35C10, 35E15, 40G10}
\keywords{$k$-summability, formal power series, linear partial differential equations, generalised integral means, Pizzetti's formula}
\date{}
\begin{abstract}
We describe partial differential operators for which we can construct generalised integral means satisfying Pizzetti-type formulas.
Using these formulas we give a new characterisation of summability of formal power series solutions to some multidimensional partial differential
equations in terms of holomorphic properties of generalised integral means of the Cauchy data.
\end{abstract}
\maketitle

\section{Introduction}
We consider the initial value problem for a multidimensional linear partial differential equation with constant coefficients 
\begin{gather}
 \label{eq:1.1}
(\partial_t - P(\partial_z))u = 0,\qquad u(0,z)=\varphi(z),
\end{gather}
where $t\in \mathbb{C}$, $z\in\mathbb{C}^n$, $P(\partial_z)\in\mathbb{C}[\partial_z]$ is a differential operator of order greater than $1$ with 
complex coefficients and $\varphi$ is holomorphic in a complex neighbourhood of the origin. The unique formal power series solution $\widehat{u}$
of (\ref{eq:1.1}) is in general divergent. So, it is natural to ask about sufficient and necessary conditions 
(expressed in terms of the Cauchy datum $\varphi$) under which the formal solution $\widehat{u}$ is convergent or, more generally, summable.

If the spatial variable $z$ is one-dimensional then, by the general theory introduced by Balser \cite{B5}, 
one can find the sufficient condition for summability of $\widehat{u}$ in
terms of the analytic continuation property of the Cauchy datum $\varphi$ (see \cite[Theorem 5]{B5}). Moreover, in this case, 
due to a certain symmetry between the variables $t$ and $z$, this sufficient condition is also necessary
(see \cite[Theorem 4]{Mic5} and \cite{Mic10}). 

But if the spatial variable $z$ is multidimensional then the characterisation of analytic and summable solutions of (\ref{eq:1.1})
in terms of $\varphi$ is much more complicated. 

Such characterisation were found in the special case $P(\partial_z)=\Delta_z=\sum_{k=1}^n\partial_{z_k}^2$, when (\ref{eq:1.1}) is the Cauchy problem
for the multidimensional heat equation. Namely, in this case {\L}ysik \cite{L2} proved that $\widehat{u}$ is convergent if and only if the integral
mean of $\varphi$ over the closed ball $B(x,r)$ or the sphere $S(x,r)$, as a function of the radius $r$, extends to an entire function of
exponential order at most $2$. Moreover, the author \cite{Mic9} showed that $\widehat{u}$ is $1$-summable in a direction $d$  if and only if
the integral mean of $\varphi$ over the closed ball $B(x,r)$ or the sphere $S(x,r)$, can be analytically continued to infinity in some sectors
bisected by $d/2$ and $\pi+d/2$ with respect to $r$, and this continuation is of exponential order at most $2$ as $r$ tends to infinity.

These characterisations are based on the Pizzetti formulas which give the expansions of the integral means of $\varphi$  in terms of the
radius $r$ with coefficients depending on the iterated Laplacians of $\varphi$ (see Proposition \ref{pr:pizzetti}).

Such expansion for the
surface integral mean in $\RR^3$ was established already in 1909 by Pizzetti \cite{P}.
His result was extended to $\RR^n$ by Nicolesco \cite{N}. Another proof of classical Pizzetti formula in $\RR^n$, which 
is based on so called generating functions of the solid and spherical means, was given recently by Toma \cite{To}.

The Pizzetti formula has been also generalised by Gray and Willmore \cite{G-W} to arbitrary Riemannian manifolds,
by Da Lio and Rodino \cite{Lio-Ro} to the case of the heat operator, and by Bonfiglioli \cite{Bo1,Bo2} to the case of some
subelliptic operators such as the Kohn Laplace operator on the Heisenberg group and its generalisation to orthogonal sub-Laplacians on $H$-type groups.

In the paper we are concerned with Pizzetti-type formulas which express a given generalised integral mean in $\RR^n$ (or $\CC^n$)
as a series in the radius $r$
whose coefficients depend on the iterated operator $P(\partial_z)$,
where $P(\partial_z)$ is a given partial differential operator with constant coefficients.
Observe that our approach to the generalisation of the Pizzetti formula is different from \cite{Lio-Ro} and \cite{Bo1,Bo2},
where the coefficients of the expansions are not given directly by the iterated heat operator or by the iterated subelliptic operators. 

Generalised integral means and Pizzetti-type formulas are the main tools used in the paper. Using them, we extend the results \cite{L2}
and \cite{Mic9} to more general multidimensional partial differential operators $P(\partial_z)$. 

We show that if a generalised integral mean of $\varphi$ satisfies a Pizzetti-type formula for the operator $P(\partial_z)$ then we are
able to characterise convergent (and under some additional conditions also summable) solutions of (\ref{eq:1.1}) in terms of the generalised
integral means (Theorem \ref{th:sum}).
For this reason it is important to describe such operators $P(\partial_z)$,
for which we can find generalised integral means satisfying a Pizzetti-type formula.

In the paper we construct
such generalised integral means for
every homogeneous operator with real coefficients of order $1$ (Proposition \ref{pr:first}) and for every elliptic homogeneous operator of order $2$
with real
coefficients (Proposition \ref{pr:elliptic}). As a corollary we obtain a characterisation of analytic and $1$-summable solutions
of (\ref{eq:1.1}) in terms of generalised integral means in the case when $P(\partial_z)=\sum_{i,j=1}^na_{ij}\partial^2_{z_iz_j}$
is a homogeneous elliptic operator of order $2$ with real
coefficients (Theorem \ref{th:elliptic}). We also prove that it is impossible to find a generalised integral mean
satisfying a Pizzetti-type formula for any homogeneous (Theorem \ref{th:homo}) or quasi-homogeneous (Theorem \ref{th:quasi}) operator of 
order $p>2$.

We also extend the notion of generalised integral means to the complex case. Then we are able to construct
complex generalised integral means satisfying Pizzetti-type formulas for $P(\partial_z)=Q^s(\partial_z)$, where $Q(\partial_z)$
is a homogeneous operator of order $p\leq 2$ and $s\in\NN$. In particular, as corollaries we obtain characterisations of analytic
and summable solutions of (\ref{eq:1.1}) in terms of complex generalised integral means in the case when
$P(\partial_{z})=\partial_{z_1}^2-\sum_{k=2}^n\partial_{z_k}^2=\square_z$ is the wave operator (Corollary \ref{co:wave}),
$P(\partial_z)=\Delta_z^s$ is the $s$-Laplace operator (Corollary \ref{co:laplace}),
$P(\partial_z)=(\sum_{k=1}^na_k\partial_{z_k})^s$ (Corollary \ref{co:first}) and 
$P(\partial_z)=  (\sum_{i,j=1}^na_{ij}\partial^2_{z_iz_j})^s$ (Corollary \ref{co:second}).

\section{Notation}
Throughout the paper $B(x,r)$ ($S(x,r)$, respectively) denotes the real closed ball (sphere, respectively) with centre at $x\in\RR^n$ and
radius $r>0$. Moreover, the complex disc in $\CC^n$ with centre at
the origin and radius $r>0$ is denoted by $D^n_r:=\{z\in\CC^n:\ |z|< r\}$. To simplify notation, we write $D_r$ instead of $D^1_r$. 
If the radius $r$ is not essential, then we denote it briefly by $D^n$ (resp. $D$).
\par
The \emph{Pochhammer symbol} is defined for non-negative integers $k$ and complex numbers $a$ as $(a)_0:=1$ and $(a)_k:=a(a+1)\cdots(a+k-1)$
for $k\in\NN$.
\par
\par
\emph{A sector in a direction $d\in\RR$ with an opening $\varepsilon>0$}
in the universal covering space $\widetilde{\CC\setminus\{0\}}$ of $\CC\setminus\{0\}$ is defined by
\[
S_d(\varepsilon):=\{z\in\widetilde{\CC\setminus\{0\}}:\ z=re^{i\theta},\
d-\varepsilon/2<\theta<d+\varepsilon/2,\ r>0\}.
\]
Moreover, if the value of opening angle $\varepsilon$ is not essential, then we denote it briefly by $S_d$.
\par
Analogously, by a \emph{disc-sector in a direction $d\in\RR$ with an opening $\varepsilon>0$ and radius $r_1>0$}
we mean a domain $\widehat{S}_d(\varepsilon;r_1):=S_d(\varepsilon)\cup D_{r_1}$. If the values of $\varepsilon$ and $r_1$
are not essential, we write it $\widehat{S}_d$ for brevity (i.e. $\widehat{S}_d=S_d\cup D$).
\par
By $\mathcal{O}(G)$ we understand the space of holomorphic functions on a domain $G\subseteq\CC^n$.
\par
The space of formal power series
$ \widehat{u}(t,z)=\sum_{j=0}^{\infty}u_j(z) t^j$ with $u_j(z)\in\Oo(D^n)$ is denoted by $\Oo(D^n)[[t]]$ or shortly by $\Oo[[t]]$.
We use the ``hat'' notation (i.e. $\widehat{u}$) to denote the formal power series.
If the formal power series $\widehat{u}$ is convergent, we denote its sum by $u$.

\section{Moment functions and k-summability}
   We recall the notion of moment methods introduced by Balser \cite{B2} and next we use them to
   define the Gevrey order and the $k$-summability. For more details we refer the reader to \cite{B2}.

\begin{Df}
A function $u(t,z)\in\Oo(\widehat{S}_d(\varepsilon;r_1)\times D^n_r)$
is of \emph{exponential growth of order at most $k>0$ as $t\to\infty$
in $\widehat{S}_d(\varepsilon;r_1)$} if for every $\widetilde{r}\in(0,r)$, $\widetilde{r}_1\in(0,r_1)$ and every
$\widetilde{\varepsilon}\in(0,\varepsilon)$ there exist
$A,B<\infty$ such that
\begin{gather*}
\max\limits_{|z|\leq \widetilde{r}}|u(t,z)| \leq Ae^{B|t|^k}
\quad \textrm{for} \quad t\in \widehat{S}_d(\widetilde{\varepsilon};\widetilde{r}_1).
\end{gather*}
The space of such functions is denoted by $\Oo^k(\widehat{S}_d(\varepsilon;r_1)\times D^n_r)$. If the values of $\varepsilon$, $r_1$ and $r$
are not essential, we write it $\Oo^k(\widehat{S}_d\times D^n)$ for brevity.
\end{Df}
   
   \begin{Df}[see {\cite[Section 5.5]{B2}}]
    \label{df:moment}
    A pair of functions $e_m$ and $E_m$ is said to be \emph{kernel functions of order $k$} ($k>1/2$) if
    they have the following properties:
   \begin{enumerate}
    \item[1.] $e_m\in\Oo(S_0(\pi/k))$, $e_m(z)/z$ is integrable at the origin, $e_m(x)\in\RR_+$ for $x\in\RR_+$ and
     $e_m$ is exponentially flat of order $k$ in $S_0(\pi/k)$ (i.e. for every $\varepsilon > 0$ there exist $A,B > 0$
     such that $|e_m(z)|\leq A e^{-(|z|/B)^k}$ for $z\in S_0(\pi/k-\varepsilon)$).
    \item[2.] $E_m\in\Oo^{k}(\CC)$ and $E_m(1/z)/z$ is integrable at the origin in $S_{\pi}(2\pi-\pi/k)$.
    \item[3.] The connection between $e_m$ and $E_m$ is given by the \emph{corresponding moment function
    $m$ of order $1/k$} as follows.
     The function $m$ is defined in terms of $e_m$ by
     \begin{gather}
      \label{eq:e_m}
      m(u):=\int_0^{\infty}x^{u-1} e_m(x)dx \quad \textrm{for} \quad \RE u \geq 0
     \end{gather}
     and the kernel function $E_m$ has the power series expansion
     \begin{gather}
      \label{eq:E_m}
      E_m(z)=\sum_{j=0}^{\infty}\frac{z^j}{m(j)} \quad  \textrm{for} \quad z\in\CC.
     \end{gather}
   \end{enumerate}
   \end{Df}
   
    Observe that in case $k\leq 1/2$ the set $S_{\pi}(2\pi-\pi/k)$ is ill-defined,
    so the second property in Definition \ref{df:moment} can not be satisfied. It means that we
    must define the kernel functions of order $k\leq 1/2$ and the corresponding moment functions
    in another way.
    
    \begin{Df}[see {\cite[Section 5.6]{B2}}]
     \label{df:small}
     A function $e_m$ is called \emph{a kernel function of order $k>0$} if we
     can find a pair of kernel functions $e_{\widetilde{m}}$ and $E_{\widetilde{m}}$ of
     order $pk>1/2$ (for some $p\in\NN$) so that
     \begin{gather*}
      e_m(z)=e_{\widetilde{m}}(z^{1/p})/p \quad \textrm{for} \quad z\in S(0,\pi/k).
     \end{gather*}
     For the kernel function $e_m$ of order $k>0$ we define the
     \emph{corresponding moment function $m$ of order $1/k>0$} by (\ref{eq:e_m}) and
     the \emph{kernel function $E_m$ of order $k>0$} by (\ref{eq:E_m}).
    \end{Df}
    
    \begin{Rem}
     \label{re:m_tilde}
     Observe that by Definitions \ref{df:moment} and \ref{df:small} we have
     \begin{eqnarray*}
      m(u)=\widetilde{m}(pu) & \textrm{and} &
      E_m(z)=\sum_{j=0}^{\infty}\frac{z^j}{m(j)}=\sum_{j=0}^{\infty}\frac{z^j}{\widetilde{m}(jp)}.
     \end{eqnarray*}
     Hence if $\widetilde{m}$ is a moment function of order $s>0$ then $m(u):=\widetilde{m}(pu)$
     is a moment function of order $ps$.
    \end{Rem}

\begin{Ex}
\label{ex:functions}
 For any $a\geq 0$, $b\geq 1$ and $k>0$ we can construct the following examples of kernel functions $e_m$ and 
 $E_m$ of 
orders $k>0$ with the corresponding moment function $m$ of order $1/k$ satisfying Definition \ref{df:moment}
or \ref{df:small}:
 \begin{itemize}
  \item $e_m(z)=akz^{bk}e^{-z^k}$,
  \item $m(u)=a\Gamma(b+u/k)$, where $\Gamma$ is the gamma function,
  \item $E_m(z)=\frac{1}{a}\sum_{j=0}^{\infty}\frac{z^j}{\Gamma(b+j/k)}$.
\end{itemize}
In particular for $a=b=1$ we get the kernel functions and the corresponding moment function, which are used  
in the classical theory of $k$-summability.
\begin{itemize}
    \item $e_m(z)=kz^ke^{-z^k}$,
    \item $m(u)=\Gamma(1+u/k)$,
    \item $E_m(z)=\sum_{j=0}^{\infty}\frac{z^j}{\Gamma(1+j/k)}=:\mathbf{E}_{1/k}(z)$, where $\mathbf{E}_{1/k}$ is the
    Mittag-Leffler function of index $1/k$.
\end{itemize}
For any $s>0$ we will denote by $\Gamma_s$ the function $\Gamma_s(u):=\Gamma(1+su)$,
which is the crucial example of a moment function of order $s$.
\end{Ex}

By \cite[Theorems 31 and 32]{B2}  we obtain the following proposition, which allows us to construct new moment functions.
\begin{Prop}
 \label{pr:moments}
 Let $m_1$, $m_2$ be moment functions of orders $s_1,s_2>0$ respectively. Then
 \begin{itemize}
  \item $m_1m_2$ is a moment function of order $s_1+s_2$,
  \item $m_1/m_2$ is a moment function of order $s_1-s_2$ for $s_1>s_2$.
 \end{itemize}
\end{Prop}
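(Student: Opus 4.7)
The plan is to construct explicit kernel functions associated to $m_1 m_2$ and $m_1/m_2$, and then verify the axioms of Definition \ref{df:moment} (or of Definition \ref{df:small}, when the order of the resulting moment function forces $k\leq 1/2$).

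\textbf{Product.} Write $k_i=1/s_i$ and $k=1/(s_1+s_2)$. The natural candidate kernel is the Mellin convolution
\[
 e(x) \;:=\; \int_0^\infty e_{m_1}(x/y)\,e_{m_2}(y)\,\frac{dy}{y},
\]
since Fubini yields $\int_0^\infty x^{u-1} e(x)\,dx = m_1(u)\,m_2(u)$ for $\RE u\geq 0$. The bulk of the work is to check the axioms for $e$: holomorphy on $S_0(\pi/k)$ is obtained by rotating the $y$-contour to a ray $\arg y=\theta$ inside $S_0(\pi/k_2)$ and exploiting the exponential flatness of $e_{m_1}$ and $e_{m_2}$ in their respective sectors; integrability of $e(x)/x$ at the origin follows immediately from the analogous property of both factors; the essential point is exponential decay of $e$ of order $k$, which is a saddle-point estimate on the phase $-B_1(x/y)^{k_1}-B_2 y^{k_2}$, whose critical value is proportional to $-x^{k_1 k_2/(k_1+k_2)}=-x^k$. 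If $k>1/2$ this completes Definition \ref{df:moment}; if $k\leq 1/2$, choose $p\in\NN$ with $pk>1/2$, run the same construction for $u\mapsto m_1(pu)\,m_2(pu)$, and descend via Remark \ref{re:m_tilde}.

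\textbf{Quotient.} Set $s:=s_1-s_2>0$ and $n(u):=m_1(u)/m_2(u)$. Here I would work through the $E$-kernel rather than $e$: by (\ref{eq:E_m}),
\[
 E_n(z) \;=\; \sum_{j=0}^\infty \frac{m_2(j)}{m_1(j)}\,z^j.
\]
Stirling-type asymptotics available for any moment function of order $\sigma$ (compare Example \ref{ex:functions}) give $m_i(j)\asymp \Gamma(1+s_i j)$ up to slower factors, so $m_2(j)/m_1(j)$ decays like $1/\Gamma_s(j)$ up to bounded terms, and the series defines an entire function of exponential growth of order $1/s$; hence $E_n\in\Oo^{1/s}(\CC)$. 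The companion flat kernel $e_n$ on $S_0(\pi s)$ is then recovered by the standard Laplace/Borel inversion against $E_n$, and the required axioms follow. Consistency is automatic: once $n$ is shown to be a moment function of order $s$, the product case applied to $n$ and $m_2$ returns $m_1(u)$, as required.

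The main obstacle is the quotient case, where the asymptotics of $m_1/m_2$ must be tight enough to guarantee both that $E_n$ grows of order exactly $1/s$ (no faster) and that the reconstructed $e_n$ has the correct exponential flatness of order $1/s$ in $S_0(\pi s)$. The saddle-point estimate for the product is essentially classical, whereas the ``division of kernel functions'' underlying the quotient demands a careful Laplace/Borel argument; this is precisely the content of Balser's Theorems 31 and 32, on which the proposition is based.
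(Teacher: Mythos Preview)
The paper offers no proof of its own here: Proposition~\ref{pr:moments} is stated as a direct consequence of \cite[Theorems~31 and~32]{B2}, with nothing further. So your proposal is not being compared against an argument in the paper but against Balser's original proofs --- and you say as much in your closing paragraph.

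Your outline of the product case (Mellin convolution of the $e$-kernels, contour rotation for holomorphy in $S_0(\pi/k)$, saddle-point for exponential flatness of order $k=k_1k_2/(k_1+k_2)$) is the standard route and is essentially Balser's Theorem~31. The reduction for $k\le 1/2$ via Definition~\ref{df:small} and Remark~\ref{re:m_tilde} is also the right mechanism, though your phrasing ``run the same construction for $u\mapsto m_1(pu)\,m_2(pu)$'' has the ramification going the wrong way: to exhibit $m_1m_2$ as $\tilde m(p\,\cdot)$ you need $\tilde m(v)=m_1(v/p)\,m_2(v/p)$, not $m_i(pv)$.

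For the quotient, the plan to start from $E_n(z)=\sum_j m_2(j)/m_1(j)\,z^j$ and then ``recover $e_n$ by the standard Laplace/Borel inversion'' is where the sketch is genuinely incomplete. Not every entire function of exponential order $1/s$ is the $E$-kernel of an admissible pair in the sense of Definition~\ref{df:moment}: you must still produce an $e_n$ that is holomorphic on $S_0(\pi s)$, exponentially flat of order $1/s$ there, \emph{strictly positive on $\RR_+$}, and with $e_n(x)/x$ integrable at the origin --- none of which follow from the growth estimate on $E_n$ alone. Balser's Theorem~32 does not proceed by inverting $E_n$; it constructs $e_{m_1/m_2}$ directly via an explicit integral pairing $e_{m_1}$ against $E_{m_2}$ (a ``division of kernels'' dual to the Mellin convolution you used for the product), and then verifies the axioms from that formula. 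You correctly identify this as the hard step, but ``standard Laplace/Borel inversion'' does not name the construction that actually works.
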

In particular we have
\begin{Cor}
 \label{co:moment}
 We assume that $s>1$. Then $m_1$ is a moment function of order $s$ if and only if $m_2=m_1/\Gamma_1$ is a moment function of order $s-1$.
\end{Cor}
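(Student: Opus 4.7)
The plan is to obtain this directly from Proposition \ref{pr:moments}, using $\Gamma_1(u)=\Gamma(1+u)$ as the key auxiliary moment function of order $1$ (which was singled out in Example \ref{ex:functions} for precisely this sort of use).

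For the forward implication, I would assume $m_1$ is a moment function of order $s>1$. Since $\Gamma_1$ is a moment function of order $1$, and the orders satisfy $s>1$, the second bullet of Proposition \ref{pr:moments} applies with $m_1,m_2,s_1,s_2$ there taken to be $m_1,\Gamma_1,s,1$, yielding that $m_2:=m_1/\Gamma_1$ is a moment function of order $s-1$.

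For the converse, I would assume $m_2=m_1/\Gamma_1$ is a moment function of order $s-1$. Multiplying by $\Gamma_1$ and invoking the first bullet of Proposition \ref{pr:moments} (product of moment functions) gives that $m_1=m_2\cdot \Gamma_1$ is a moment function of order $(s-1)+1=s$. Note that this direction does not need the hypothesis $s>1$ as such, but we do need $s-1>0$ so that the order of $m_2$ is positive, which is guaranteed by $s>1$.

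There is essentially no obstacle here: the statement is a clean packaging of the algebraic structure of the set of moment functions under multiplication and division by $\Gamma_1$. The only mild subtlety is verifying that both directions use Proposition \ref{pr:moments} legitimately, i.e.\ that the order condition $s_1>s_2$ is satisfied when we divide (here $s>1$) and that no extra positivity condition is needed in the multiplication step. Both are immediate from the assumption $s>1$.
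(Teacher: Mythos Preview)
Your proposal is correct and matches the paper's approach exactly: the corollary is stated immediately after Proposition~\ref{pr:moments} with the phrase ``In particular we have,'' indicating it is a direct consequence of the two bullets applied with $\Gamma_1$ (a moment function of order $1$ by Example~\ref{ex:functions}), precisely as you argue.
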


Every moment function $m$ of order $s>0$ has the same growth as $\Gamma_s$. Precisely speaking,
we have 
\begin{Prop}[see {\cite[Section 5.5]{B2}}]
  \label{pr:order}
  If $m$ is a moment function of order $s>0$
  then there exist constants $c,C>0$ such that
    \begin{gather}
     \label{eq:order}
     c^j\Gamma_s(j)\leq m(j) \leq C^j\Gamma_s(j) \quad \textrm{for every} \quad j\in\NN.
    \end{gather}
  \end{Prop}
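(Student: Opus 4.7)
The plan is to first establish (\ref{eq:order}) for moment functions of order $s=1/k$ with $k>1/2$ (so that $m$ is given directly by Definition \ref{df:moment}), and then to reduce the general case $s>0$ to this one via the identity $m(u)=\widetilde m(pu)$ supplied by Definition \ref{df:small} and Remark \ref{re:m_tilde}.

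For the upper bound in the case $k>1/2$, I would use the exponential flatness of $e_m$ on $\RR$: there exist $A,B>0$ with $|e_m(x)|\le A e^{-(x/B)^k}$ for $x\ge 0$. Inserting this into $m(j)=\int_0^{\infty} x^{j-1}e_m(x)\,dx$ and substituting $y=(x/B)^k$ turns the integral into a Gamma integral, producing
$$ m(j)\;\le\;\frac{A}{k}\,B^j\,\Gamma(j/k). $$
The elementary identity $\Gamma(j/k)=k\Gamma(1+j/k)/j$ for $j\ge 1$, together with a direct check at $j=0$ (the integrability of $e_m(z)/z$ at the origin makes $m(0)$ finite), then yields $m(j)\le C^j\Gamma_s(j)$ for a suitable $C$.

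For the lower bound I would exploit $E_m\in\Oo^k(\CC)$, which provides $A',B'>0$ with $|E_m(z)|\le A' e^{B'|z|^k}$ on $\CC$. Cauchy's inequality on the circle $|z|=R$ applied to the expansion (\ref{eq:E_m}) gives $1/m(j)\le A' e^{B' R^k}R^{-j}$; optimising by choosing $R=(j/(B'k))^{1/k}$ yields
$$ \frac{1}{m(j)}\;\le\;A'\,\Bigl(\frac{eB'k}{j}\Bigr)^{j/k}. $$
Comparing this with the Stirling asymptotic $\Gamma_s(j)=\Gamma(1+j/k)\sim\sqrt{2\pi j/k}\,(j/(ek))^{j/k}$ produces $1/m(j)\le c^{-j}/\Gamma_s(j)$ for a suitable $c>0$, the polynomial Stirling factor being absorbed into the geometric constant and the finitely many small $j$ being handled by further shrinking $c$.

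Finally, for arbitrary $s>0$ (equivalently arbitrary $k>0$), Definition \ref{df:small} furnishes $p\in\NN$ and a moment function $\widetilde m$ of order $1/(pk)=s/p$ with $pk>1/2$ such that $m(u)=\widetilde m(pu)$. Applying the previous two steps to $\widetilde m$ yields constants $\widetilde c,\widetilde C>0$ with $\widetilde c^{\,n}\Gamma_{s/p}(n)\le\widetilde m(n)\le\widetilde C^{\,n}\Gamma_{s/p}(n)$, and specialising to $n=pj$ combined with the identity $\Gamma_{s/p}(pj)=\Gamma(1+sj)=\Gamma_s(j)$ delivers (\ref{eq:order}) with $c=\widetilde c^{\,p}$ and $C=\widetilde C^{\,p}$. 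I expect the main obstacle to be the lower bound: carrying the Cauchy-plus-Stirling analysis through to clean geometric constants valid for every $j\in\NN$, rather than only asymptotically, requires absorbing polynomial corrections and treating small $j$ separately, although the argument itself is standard.
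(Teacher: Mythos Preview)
The paper does not supply its own proof of this proposition; it is stated as a citation to Balser's monograph \cite[Section 5.5]{B2}, with no argument in the text. There is therefore nothing in the paper to compare your proof against directly.

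That said, your argument is correct and is essentially the standard one. The upper bound via exponential flatness of $e_m$ on $\RR_+$ and the Gamma-integral substitution is exactly how one proceeds from property~1 of Definition~\ref{df:moment}; the lower bound via Cauchy's inequalities applied to the entire function $E_m$ of exponential order $k$ is the natural way to exploit property~2, and your optimisation in $R$ followed by Stirling is the usual route to geometric control. The reduction of the general case $s>0$ to $k>1/2$ through $m(u)=\widetilde m(pu)$ and the identity $\Gamma_{s/p}(pj)=\Gamma_s(j)$ is precisely what Definition~\ref{df:small} and Remark~\ref{re:m_tilde} are set up to allow. Your own caveat about absorbing the polynomial Stirling factor into the geometric constant and handling finitely many small $j$ by shrinking $c$ is routine and causes no real difficulty; since the statement is for $j\in\NN$ (and the paper uses $\NN_0$ when it wants to include zero), the separate discussion of $j=0$ is in fact unnecessary here.
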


More generally we say
\begin{Df}
 If $s>0$ and a function $m$ satisfies (\ref{eq:order}) then
 $m$ is called a \emph{function of order $s$}.  
\end{Df}

We use moment functions to define moment Borel transforms, the Gevrey order and the Borel summability.
\begin{Df}
 Let $m$ be a moment function of order $s>0$ (or, more generally, a function of order $s>0$). Then the linear operator $\Bo_{m}\colon \Oo[[t]]\to\Oo[[t]]$ defined by
 \[
  \Bo_{m}\big(\sum_{j=0}^{\infty}u_jt^{j}\big):= \sum_{j=0}^{\infty}\frac{u_j}{m(j)}t^{j}
 \]
 is called the \emph{$m$-moment Borel transform of order $s$}.
\end{Df}

We define the Gevrey order of formal power series as follows
   \begin{Df}
    \label{df:summab}
    Let $s>0$. Then
    $\widehat{u}\in\Oo(D^n)[[t]]$ is called a \emph{formal power series of Gevrey order $s$} if
    there exists a disc $D\subset\CC$ with centre at the origin such that
    $\Bo_{\Gamma_s}\widehat{u}\in\Oo(D\times D^n)$. The space of formal power series of Gevrey 
    order $s$ is denoted by $\Oo(D^n)[[t]]_s$.
   \end{Df}

\begin{Rem}
 \label{re:moment}
 By Proposition \ref{pr:order}, we may replace $\Gamma_s$ in Definition
 \ref{df:summab} by any function $m$ of order $s$.
\end{Rem}

Now we are ready to define the $k$-summability of formal power series (see Balser \cite{B2})
\begin{Df}
\label{df:summable}
Let $k>0$ and $d\in\RR$. Then $\widehat{u}\in\Oo(D^n)[[t]]$ is called
\emph{$k$-summable in a direction $d$} if there exists a disc-sector $\widehat{S}_d$ in a direction $d$ such that
$\Bo_{\Gamma_{1/k}}\widehat{u}(t,z)\in\Oo^k(\widehat{S}_d\times D^n)$.
\end{Df}

We also define a class of functions of order $s$, which contains moment functions of order $s$.
\begin{Df}
 If there exists a moment function $m_2$ of order $s>0$ and
 polynomials $p_1(j)$, $p_2(j)$ and $c>0$ such that a function $m_1$ of order $s$ satisfies
 \begin{gather}
  \label{eq:similar_to}
  p_1(j)m_1(j)=p_2(j)m_2(j)c^j \quad \textrm{for every} \quad j\in\NN
 \end{gather}
 then $m_1$ is called a \emph{generalised moment function of order $s$}.
\end{Df}

The importance of the assumption (\ref{eq:similar_to}) comes from
\begin{Prop}
  \label{pr:5}
  We assume that $m_1(j)$ and $m_2(j)$ are sequences satisfying (\ref{eq:similar_to}), $k>0$, $d\in\RR$
  and $\widehat{F}_i(t)=\sum_{j=0}^{\infty}\frac{a_j}{m_i(j)}t^j$ for $i=1,2$.
  Then $F_1(t)\in\Oo^k(\widehat{S}_d)$ if and only if $F_2(t)\in\Oo^k(\widehat{S}_d)$.
 \end{Prop}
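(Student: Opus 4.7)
The plan is to recast the coefficient-level hypothesis (\ref{eq:similar_to}) as an operator identity in the Euler operator $t\partial_t$, and then to argue that polynomial combinations of $t\partial_t$ preserve membership in $\Oo^k(\widehat{S}_d)$ in both directions.

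Since $t\partial_t$ acts as multiplication by $j$ on the monomial $t^j$, one has $p(t\partial_t)t^j=p(j)t^j$ for every polynomial $p$. Combining this with the hypothesis $p_1(j)m_1(j)=p_2(j)m_2(j)c^j$ gives the formal operator identity
\[
 p_1(t\partial_t)\widehat{F}_2(t)=\sum_{j=0}^{\infty}\frac{p_1(j)\,a_j}{m_2(j)}t^j=\sum_{j=0}^{\infty}\frac{p_2(j)\,c^j\,a_j}{m_1(j)}t^j=p_2(t\partial_t)\widehat{F}_1(ct).
\]
Two stability facts come next. Since $c>0$, the rescaling $F(t)\mapsto F(ct)$ is a bijection of $\Oo^k(\widehat{S}_d)$ onto itself. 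Moreover $t\partial_t$, and hence any polynomial $p(t\partial_t)$, preserves $\Oo^k(\widehat{S}_d)$: Cauchy's estimate on a slightly narrower disc-sector bounds $|\partial_t F(t)|$ by $(A/\rho)\,e^{B(|t|+\rho)^k}$, so the extra factor of $|t|$ is absorbed in the exponential. Combined with the identity above, $F_2\in\Oo^k(\widehat{S}_d)$ forces $p_2(t\partial_t)F_1(ct)\in\Oo^k(\widehat{S}_d)$. I would also note, as a separate remark, that the ratio of coefficients $(a_j/m_1(j))/(a_j/m_2(j))=p_1(j)/(p_2(j)c^j)$ grows at most polynomially times $c^{-j}$, so convergence of $\widehat{F}_2$ near the origin implies convergence of $\widehat{F}_1$ near the origin, and symmetrically; thus the two formal series simultaneously define genuine holomorphic germs at $0$.

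The main obstacle is the inverse step: passing from $p_2(t\partial_t)F_1(ct)\in\Oo^k(\widehat{S}_d)$ to $F_1(ct)\in\Oo^k(\widehat{S}_d)$. Factoring $p_2(x)=b\prod_i(x-\alpha_i)$, by induction on the degree this reduces to inverting a single factor $(t\partial_t-\alpha)$. That is, one must show that if $g\in\Oo^k(\widehat{S}_d)$ and $\widehat{f}=\sum f_j t^j$ is the formal series with $(j-\alpha)f_j=g_j$ for $j\neq\alpha$ and $f_\alpha$ fixed by the surrounding inductive structure when $\alpha\in\NN$, then $f\in\Oo^k(\widehat{S}_d)$. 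Splitting $g=g_{\rm low}+g_{\rm high}$ with $g_{\rm low}$ comprising the monomials of degree at most $\max(0,\lfloor\RE\alpha\rfloor)$ (and the $t^\alpha$ term when $\alpha\in\NN$, for which automatic compatibility forces $g_\alpha=0$), one writes $f$ as a polynomial piece plus $t^\alpha\int_{0}^{t}s^{-\alpha-1}g_{\rm high}(s)\,ds$; after the subtraction the integrand is holomorphic at the origin, and a direct radial estimate yields $|f(t)|\le A'\,e^{B'|t|^k}$ on slightly narrower disc-sectors. Iterating over all factors of $p_2$ gives $F_1(ct)\in\Oo^k(\widehat{S}_d)$, whence $F_1\in\Oo^k(\widehat{S}_d)$. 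The converse implication is symmetric, inverting $p_1$ instead of $p_2$.
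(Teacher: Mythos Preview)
Your approach is essentially the same as the paper's: you derive the identity $p_1(t\partial_t)F_2(t)=p_2(t\partial_t)F_1(ct)$ from the coefficient relation and use it to transfer membership in $\Oo^k(\widehat{S}_d)$ between $F_1$ and $F_2$. In fact you are more thorough than the paper, which simply asserts the final equivalence after writing down the operator identity, whereas you supply the details (stability of $\Oo^k(\widehat{S}_d)$ under $t\partial_t$ and under rescaling, and the integral-formula inversion of each linear factor $t\partial_t-\alpha$) that justify the nontrivial direction.
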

 \begin{proof}
  Since $m_1(j)$ and $m_2(j)$ satisfy (\ref{eq:similar_to}), 
  there exist polynomials $p_1(j)$, $p_2(j)$ and $c>0$ such that
 \begin{eqnarray*}
 \frac{p_1(j)}{m_2(j)}=\frac{p_2(j)}{m_1(j)}c^j & \textrm{for every} & j\in\NN.
 \end{eqnarray*}
 Hence $m_1(j)$ and $m_2(j)$ have the same order and consequently
 also $\widehat{F}_1(t)$ and $\widehat{F}_2(t)$ have the same Gevrey order, so in particular
 $\widehat{F}_1(t)\in\CC[[t]]_0$ if and only if $\widehat{F}_2(t)\in\CC[[t]]_0$.
 Moreover, if $\widehat{F}_i(t)\in\CC[[t]]_0$ ($i=1,2$) then their sums $F_i(t)$ are well defined and satisfy
 $$
 p_1(t\partial_t)F_2(t)=\sum_{j=0}^{\infty}\frac{p_1(j)a_j}{m_2(j)}t^j=\sum_{j=0}^{\infty}\frac{p_2(j)a_jc^j}{m_1(j)}t^j
 = p_2(t\partial_t)F_1(ct).
 $$
 It means that $F_1(t)\in\Oo^k(\widehat{S}_d)$ if and only if $F_2(t)\in\Oo^k(\widehat{S}_d)$.
 \end{proof}

By \cite[Proposition 13]{B2} and Definition \ref{df:summable} we may also characterise convergent series $\widehat{u}$ in terms of
$\Bo_{\Gamma_{1/k}}\widehat{u}$ as follows
\begin{Prop}
 \label{pr:analytic}
 Let $k>0$ and $\widehat{u}\in\Oo(D^n)[[t]]$. Then $\widehat{u}(t,z)$ converges for sufficiently small $|t|$ if and only if
 $\Bo_{\Gamma_{1/k}}\widehat{u}(t,z)\in\Oo^k(\CC\times D^n)$.
\end{Prop}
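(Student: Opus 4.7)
\smallskip

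The plan is to translate both conditions into coefficient estimates and compare them directly, using the well-known asymptotics of $\Gamma(1+j/k)$. Write $\widehat{u}(t,z)=\sum_{j=0}^{\infty}u_j(z)t^j$, so that
\[
v(t,z):=\Bo_{\Gamma_{1/k}}\widehat{u}(t,z)=\sum_{j=0}^{\infty}\frac{u_j(z)}{\Gamma(1+j/k)}\,t^j.
\]
Fix any $r>0$ with $\overline{D^n_r}\subset D^n$ and let $M_j(r):=\sup_{|z|\leq r}|u_j(z)|$. Both properties will be reformulated as growth/decay conditions on the numbers $M_j(r)$.

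For the direction $(\Rightarrow)$, assume $\widehat{u}$ converges uniformly on $\overline{D_{\rho_0}}\times\overline{D^n_r}$ for some $\rho_0>0$. The Cauchy estimates give $M_j(r)\leq C\rho^{-j}$ for some $0<\rho<\rho_0$. Substituting this into the series defining $v$,
\[
\sup_{|z|\leq r}|v(t,z)|\leq \sum_{j=0}^{\infty}\frac{C\,|t|^j}{\rho^j\,\Gamma(1+j/k)}=C\,\mathbf{E}_{1/k}(|t|/\rho),
\]
and the classical fact that the Mittag-Leffler function $\mathbf{E}_{1/k}$ is entire of exponential order exactly $k$ immediately yields $v\in\Oo^k(\CC\times D^n)$.

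For the converse, assume $v\in\Oo^k(\CC\times D^n)$, so that $\sup_{|z|\leq r}|v(t,z)|\leq A\,e^{B|t|^k}$ for every $t\in\CC$ with constants $A,B>0$ depending on $r$. Cauchy's estimate on the circle $|t|=R$, applied to the $j$th Taylor coefficient of $v(\,\cdot\,,z)$, gives for every $R>0$
\[
\frac{M_j(r)}{\Gamma(1+j/k)}\leq \frac{A\,e^{BR^k}}{R^j}.
\]
Optimising in $R$ (the minimum occurs at $R^k=j/(Bk)$) produces $M_j(r)\leq A\,(eBk/j)^{j/k}\,\Gamma(1+j/k)$, and Stirling's formula $\Gamma(1+j/k)\sim\sqrt{2\pi j/k}\,(j/(ek))^{j/k}$ collapses this to $M_j(r)\leq C\sqrt{j}\,B^{j/k}$. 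Hence $\sum_j u_j(z)t^j$ converges uniformly on $\overline{D^n_r}\times\overline{D_\rho}$ for every $\rho<B^{-1/k}$, which is the desired conclusion.

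The only nontrivial step is the asymptotic matching between $\Gamma(1+j/k)^{-1}$ and the exponential-order-$k$ growth of an entire function; this is standard and may alternatively be bypassed by invoking Remark~\ref{re:moment} to replace $\Gamma_{1/k}$ by any convenient moment function of order $1/k$, together with Proposition~\ref{pr:5} to transfer the $\Oo^k$-property between Borel transforms built from equivalent moment sequences.
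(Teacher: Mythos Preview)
Your argument is correct. Note, however, that the paper does not actually supply a proof of this proposition: it is introduced with the phrase ``By \cite[Proposition 13]{B2} and Definition \ref{df:summable}'', i.e., the result is simply imported from Balser's monograph. Your write-up, by contrast, is self-contained: the forward implication uses only the Cauchy estimates together with the standard fact that the Mittag-Leffler function $\mathbf{E}_{1/k}$ is entire of exponential order $k$; the converse combines the Cauchy estimate on circles $|t|=R$ with the optimisation $R^k=j/(Bk)$ and Stirling's asymptotics for $\Gamma(1+j/k)$ to recover geometric bounds on the coefficients. This is precisely the classical computation that underlies the cited result in \cite{B2}, so nothing is different in spirit; what your version buys is that the reader need not consult the reference.

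One small caveat about your closing paragraph: the suggested alternative via Remark~\ref{re:moment} and Proposition~\ref{pr:5} is not quite well-posed. Remark~\ref{re:moment} concerns the \emph{Gevrey order} (Definition~\ref{df:summab}), not the $\Oo^k$-property on all of $\CC$, and Proposition~\ref{pr:5} transfers the $\Oo^k$-property only between sequences linked by the polynomial relation (\ref{eq:similar_to}), which is strictly stronger than having the same order. Since your main argument already establishes the proposition directly, this aside is inessential; I would simply drop it.
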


\begin{Rem}
 \label{re:general_moment}
 By the general theory of moment summability (see \cite[Section 6.5 and Theorem 38]{B2}) and by Proposition \ref{pr:5}, we may replace
 $\Gamma_{1/k}$  in Definition \ref{df:summable} by any 
 moment function $m$ of order $1/k$ or even by any
 generalised moment function of order $1/k$.
\end{Rem}

 By Proposition \ref{pr:analytic} and by Remarks \ref{re:moment}--\ref{re:general_moment} we conclude that
 \begin{Prop}
  \label{pr:similar}
  Let $k>0$, $d\in\RR$, $m$ be a function of order $1/k$ and $\widehat{u}=\sum_{j=0}^{\infty}u_j(z)t^j\in\Oo(D^n)[[t]]$.
  Then $\widehat{u}$ is convergent for sufficiently small $|t|$ if and only if 
  $\sum_{j=0}^{\infty}\frac{u_j(z)}{m(j)}t^{j} \in\Oo^{k}(\CC\times D^n)$.
  
  Moreover, if additionally $m$ is a generalised moment function of order $1/k$, then $\widehat{u}$ is $k$-summable in a direction $d$ if and only if
  there exists a disc-sector $\widehat{S}_{d}$ in a direction $d$ such that
    $\sum_{j=0}^{\infty}\frac{u_j(z)}{m(j)}t^{j} \in\Oo^{k}(\widehat{S}_d\times D^n)$.
 \end{Prop}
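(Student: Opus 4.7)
The plan is to deduce both equivalences from the machinery already assembled: Proposition \ref{pr:analytic} for convergence, Definition \ref{df:summable} for summability, the growth comparison in Proposition \ref{pr:order}, and the moment-replacement principle embodied in Proposition \ref{pr:5} together with Remarks \ref{re:moment} and \ref{re:general_moment}. Both parts are therefore reductions to the corresponding statements for the classical choice $\Gamma_{1/k}$.

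First I would handle the convergence statement. Proposition \ref{pr:analytic} with $s=1/k$ yields that $\widehat{u}$ converges for small $|t|$ if and only if $\Bo_{\Gamma_{1/k}}\widehat{u}=\sum_j u_j(z)/\Gamma_{1/k}(j)\,t^j$ belongs to $\Oo^k(\CC\times D^n)$. The remaining content is that this entire-of-exponential-order-$k$ property is insensitive to replacing $\Gamma_{1/k}$ by any function $m$ of order $1/k$. This follows from Proposition \ref{pr:order}: the two-sided bound $c^j\Gamma_{1/k}(j)\leq m(j)\leq C^j\Gamma_{1/k}(j)$, combined with the Cauchy coefficient characterisation $|a_j|\leq AB^j/\Gamma_{1/k}(j)$ of elements of $\Oo^k(\CC)$ applied to $a_j = u_j(z)/m(j)$ uniformly in $z\in D^n$, shows that swapping $m(j)$ for $\Gamma_{1/k}(j)$ merely alters a geometric factor and so preserves membership in $\Oo^k(\CC\times D^n)$.

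For the summability statement I would invoke Definition \ref{df:summable}, which expresses $k$-summability of $\widehat{u}$ in a direction $d$ as $\Bo_{\Gamma_{1/k}}\widehat{u}\in\Oo^k(\widehat{S}_d\times D^n)$. Since $m$ is a generalised moment function of order $1/k$, by definition there exist a genuine moment function $m_2$ of order $1/k$, polynomials $p_1,p_2$ and $c>0$ with $p_1(j)m(j)=p_2(j)m_2(j)c^j$. By Remark \ref{re:general_moment}, $\Gamma_{1/k}$ may be replaced by the moment function $m_2$ in the $k$-summability criterion; then Proposition \ref{pr:5}, applied in the variable $t$ with coefficients $a_j=u_j(z)$, transfers the $\Oo^k(\widehat{S}_d\times\cdot)$-property between the $m_2$-series and the $m$-series via the identity $p_1(t\partial_t)\sum_j u_j(z)/m_2(j)\,t^j = p_2(t\partial_t)\sum_j u_j(z)c^j/m(j)\,t^j$ together with the dilation $t\mapsto ct$, which only shrinks the disc-sector.

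The only subtlety I anticipate is that Proposition \ref{pr:5} is stated for scalar coefficient sequences, whereas here the coefficients $u_j(z)$ depend holomorphically on $z\in D^n$. This is not a genuine obstacle, since the operator $t\partial_t$ and the dilation $t\mapsto ct$ act continuously on $\Oo^k(\widehat{S}_d\times D^n)$ uniformly in $z$, so the scalar argument of Proposition \ref{pr:5} lifts to a parametrised one without modification. Everything else is a direct chain of citations to the results already proved.
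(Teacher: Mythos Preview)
Your proposal is correct and follows essentially the same route as the paper, which proves the proposition in one line by citing Proposition~\ref{pr:analytic} together with Remarks~\ref{re:moment} and~\ref{re:general_moment}; you have simply unpacked those citations, invoking Proposition~\ref{pr:5} explicitly for the generalised-moment step and noting the uniform-in-$z$ lift. The only cosmetic point is that in the convergence part the two-sided bound $c^j\Gamma_{1/k}(j)\le m(j)\le C^j\Gamma_{1/k}(j)$ is the \emph{definition} of ``function of order $1/k$'' rather than a consequence of Proposition~\ref{pr:order}, since here $m$ is not assumed to be a moment function.
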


\section{Generalised integral means}
 In this section we introduce the notion of generalised integral means. 
 To this end we take 
\begin{Df}
 \label{df:generalised}
   Let $\mu$ be a finite complex Borel measure supported in the closed ball $B(0,R)$
(for some $R>0$) in $\RR^n$ of total mass $1$ (i.e. $\int_{\RR^n}\,d\mu(y)=1$). Moreover we assume that $\varphi$ is a continuous function
  on a domain $\Omega\subset \RR^n$, $x\in\Omega$ and $0<r<\dist(x,\partial\Omega)/R$. Then the \emph{generalised integral mean
  $M_{\mu}(\varphi;x,r)$
  of $\varphi$} is defined by
  \begin{equation}
   \label{eq:generalised}
    M_{\mu}(\varphi;x,r):=\int_{\RR^n}\varphi(x+ry)\,d\mu(y).
  \end{equation}
\end{Df}

\begin{Rem}
 Let us assume that
 \begin{gather}
  \label{eq:means}
   \mu_B(y)=\frac{dy}{\alpha(n)} \quad \textrm{and} \quad \mu_S(y)=\frac{dS(y)}{n\alpha(n)},
 \end{gather}
where $\alpha(n)=\pi^{n/2}/\Gamma(1+n/2)$ is the volume of the $n$-dimensional unit ball $B(0,1)$,
$dy$ is the Lebesgue measure on $B(0,1)$ and $dS(y)$ is
 the surface measure on $S(0,1)$. Then 
 $$M_{\mu_B}(\varphi;x,r)=\int_{B(0,1)}\varphi(x+yr)\,d\mu_B(y)=\nint{B(0,1)}\varphi(x+yr)\,dy$$
 and
 $$M_{\mu_S}(\varphi;x,r)=\int_{S(0,1)}\varphi(x+yr)\,d\mu_S(y)=\nint{S(0,1)}\varphi(x+yr)\,dS(y)$$
 are respectively the solid and spherical means of $\varphi$.
\end{Rem}
\begin{Rem}
 We can extend Definition \ref{df:generalised} to the complex case replacing in (\ref{eq:generalised}) the real variables $x,r$
 by the complex ones $z,t$. In this way we define by (\ref{eq:generalised}) the generalised integral mean $M_{\mu}(\varphi;z,t)$
 for $z\in\CC^n$ and $t\in\CC$.
\end{Rem}
The crucial role in our considerations is played by the extensions of the Pizzetti formulas, which hold for some generalised integral means
$M_{\mu}(\varphi;z,t)$. Precisely
\begin{Df}
Let $\varphi(z)\in\Oo(D^n)$ and $P(\partial_z)\in\CC[\partial_z]$
be a homogeneous partial differential operator of order $p$ with constant coefficients.
We say that a generalised integral mean $M_{\mu}(\varphi;z,t)$ satisfies a \emph{Pizzetti-type formula for the operator $P(\partial_z)$} if
\begin{equation}
\label{eq:star}
M_{\mu}(\varphi;z,t)=\sum_{j=0}^{\infty}\frac{P^j(\partial_z)\varphi(z)}{m(j)}t^{pj}
\end{equation}
for some function $m$ of order $p>0$ with $m(0)=1$.
\end{Df}

Our aim is to describe such generalised integral means $M_{\mu}(\varphi;z,t)$ which satisfy Pizzetti-type formulas for some operators
$P(\partial_z)$.

\begin{Rem}
 \label{re:integral_mean}
 Observe that a generalised integral mean $M_{\mu}(\varphi;z,t)$ satisfying the Pizzetti-type formula (\ref{eq:star})
 for the operator $P(\partial_z)$ has the following natural properties:
\begin{enumerate}
\item[a)] $\lim\limits_{t\to 0}M_{\mu}(\varphi;z,t)=M_{\mu}(\varphi;z,0)=\varphi(z)$.
\item[b)] The mean-value property for $P(\partial_z)$: If $P(\partial_z)\varphi(z)=0$ then
  $\varphi(z)=M_{\mu}(\varphi;z,t)$ for any $z\in D^n$ and $t\in\CC$.
\item[c)] Converse to the mean-value property for $P(\partial_z)$:
 If $\varphi(z)=M_{\mu}(\varphi;z,t)$ for any $z\in D^n$ and $t>0$ then $P(\partial_z)\varphi(z)=0$.
\end{enumerate}
\end{Rem}

\section{Formal power series solutions}
We will use generalised integral means satisfying Pizzetti-type formulas to characterise summable formal power series solutions
of the Cauchy problem
\begin{eqnarray}
 \label{eq:P}
 (\partial_t - P(\partial_z))u=0,&&u(0,z)=\varphi(z)\in\Oo(D^n),
\end{eqnarray}
where $t\in\CC$, $z\in\CC^n$ and $P(\partial_z)\in\CC[\partial_z]$ is a homogeneous partial differential operator of order $p>1$
with constant coefficients.
The unique formal power solution of (\ref{eq:P}) is given by
\begin{equation*}
\widehat{u}(t,z)=\sum_{j=0}^{\infty}\frac{P^j(\partial_z)\varphi(z)}{j!}t^j\in\Oo(D^n_r)[[t]].
\end{equation*}
Since $P(\partial_z)$ is an operator of order $p$, for every $r_0\in(0,r)$ there exist $A,B<\infty$ such that
\begin{gather*}
 \sup_{|z|<r_0}\Big|\frac{P^j(\partial_z)\varphi(z)}{j!}\Big|\leq AB^j(j!)^{p-1} \quad \textrm{for every} \quad j\in\NN_0.
\end{gather*}
It means that $\widehat{u}(t,z)$ is a formal series of Gevrey order $p-1$. So, it is natural to ask about
$\frac{1}{p-1}$-summability
of $\widehat{u}(t,z)$. By Proposition \ref{pr:similar} and Corollary \ref{co:moment} we conclude
\begin{Th}
  \label{th:sum}
  Let $P(\partial_z)\in\CC[\partial_z]$ be a homogeneous partial differential operator of order $p>1$ with constant coefficients and
  $M_{\mu}(\varphi;z,t)$ be a generalised integral mean satisfying a Pizzetti-type formula (\ref{eq:star}) for the operator $P(\partial_z)$.
  Then the formal solution of (\ref{eq:P}) is convergent for sufficiently small $|t|$
  if and only if the generalised integral mean $M_{\mu}(\varphi;z,t)$ satisfies 
\begin{gather*}
M_{\mu}(\varphi;z,t) \in\Oo^{\frac{p}{p-1}}(D^n\times\CC) \quad \textrm{for} \quad k=0,\dots,p-1.
\end{gather*}
\par
Moreover, if we additionally assume that $m$ is a generalised moment function of order $p$
then the formal solution
of (\ref{eq:P}) is
  $\frac{1}{p-1}$-summable in a direction $d\in\RR$ if and only if there exist
  disc-sectors $\widehat{S}_{\frac{d+2k\pi}{p}}$ in the directions $\frac{d+2k\pi}{p}$ (for $k=0,\dots,p-1$) 
  such that the generalised integral mean $M_{\mu}(\varphi;z,t)$ satisfies 
\begin{gather*}
M_{\mu}(\varphi;z,t) \in\Oo^{\frac{p}{p-1}}(D^n\times\widehat{S}_{\frac{d+2k\pi}{p}})
\quad \textrm{for} \quad k=0,\dots,p-1.
\end{gather*}
\end{Th}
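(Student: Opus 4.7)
The plan is to reduce Theorem \ref{th:sum} to Proposition \ref{pr:similar} via two manipulations. First, a Borel-type change of the moment sequence that relates the formal solution $\widehat u$ to an auxiliary series $g(s,z) := \sum_{j\ge 0} v_j(z)\, s^j / m(j)$, where $v_j := P^j(\partial_z)\varphi$; second, the substitution $s = t^p$, which links $g$ to $M_\mu(\varphi;z,t)$.

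I would begin by setting $\tilde m(j) := m(j)/j!$. From the growth estimate (\ref{eq:order}) applied to $m$, together with $j! = \Gamma_1(j)$, one checks that $\tilde m$ is a function of order $p-1$. Moreover, if $m$ is a generalised moment function of order $p$, say $p_1(j) m(j) = p_2(j) m_2(j) c^j$ with $m_2$ a moment function of order $p$, then the same identity holds with $\tilde m$ in place of $m$ and $\tilde m_2 := m_2/\Gamma_1$ in place of $m_2$; by Corollary \ref{co:moment} this $\tilde m_2$ is a moment function of order $p-1$, exhibiting $\tilde m$ as a generalised moment function of order $p-1$. Writing $\widehat u(t,z) = \sum_j (v_j(z)/j!)\, t^j$ and noting $(v_j(z)/j!)/\tilde m(j) = v_j(z)/m(j)$, the application of Proposition \ref{pr:similar} with the summability index $1/(p-1)$ and moment sequence $\tilde m$ yields two characterisations: $\widehat u$ converges for small $|t|$ iff $g(t,z) \in \Oo^{1/(p-1)}(\CC \times D^n)$, and, under the generalised-moment hypothesis, $\widehat u$ is $1/(p-1)$-summable in a direction $\delta$ iff $g(t,z) \in \Oo^{1/(p-1)}(\widehat S_\delta \times D^n)$ for some disc-sector $\widehat S_\delta$.

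It remains to transfer these conditions on $g(s,z)$ to conditions on $M_\mu(\varphi;z,t) = g(t^p,z)$ via the substitution $s = t^p$. The identity $|s|^{1/(p-1)} = |t|^{p/(p-1)}$ shows that the exponential orders match. The preimage of $\CC$ under $t \mapsto t^p$ is all of $\CC$, so the convergence criterion becomes $M_\mu(\varphi;z,t) \in \Oo^{p/(p-1)}(D^n\times\CC)$, giving the first half of the theorem. For a disc-sector $\widehat S_d$ in the $s$-plane, the preimage under $t \mapsto t^p$ consists of $p$ disc-sectors in the $t$-plane, one for each branch of the $p$-th root, in the directions $\frac{d+2k\pi}{p}$ for $k=0,\dots,p-1$, and with appropriately rescaled opening. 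Since $M_\mu(\varphi;z,t)$ is, near the origin, a power series in $t^p$, its analytic continuations to the individual disc-sectors $\widehat S_{(d+2k\pi)/p}$ automatically agree on the common disc; no extra compatibility has to be imposed. Taking $\delta = d$ in the second half of the reduction thus gives exactly the sector condition stated in the theorem.

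The main obstacle is bookkeeping in the substitution $s = t^p$: one must verify that the $p$ branches of $t = s^{1/p}$ together tile the preimage of $\widehat S_d$ as asserted, that the exponential orders transform as $|s|^{1/(p-1)} = |t|^{p/(p-1)}$, and (for the summability part) that division by $\Gamma_1$ preserves the status of generalised moment function. Each of these is elementary, but together they are the source of the factor $p$ appearing in the directions $(d+2k\pi)/p$ and of the order ratio $p/(p-1)$ appearing in the conclusion.
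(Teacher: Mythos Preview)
Your argument is correct and is precisely the unpacking of the paper's one-line justification ``By Proposition \ref{pr:similar} and Corollary \ref{co:moment} we conclude'': you divide the moment sequence by $\Gamma_1$ to drop the order from $p$ to $p-1$ (Corollary \ref{co:moment}), apply Proposition \ref{pr:similar} at level $k=1/(p-1)$, and then perform the ramification $s=t^p$ to pass from the auxiliary series $g$ to $M_\mu$. The bookkeeping you flag (that $\tilde m=m/\Gamma_1$ inherits the generalised-moment property, that the exponential order rescales as $p/(p-1)$, and that the $p$ branches of $s^{1/p}$ produce the disc-sectors $\widehat S_{(d+2k\pi)/p}$) is exactly what the paper leaves implicit.
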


\section{First and second order homogeneous operators}
In this section we describe generalised integral means satisfying
Pizzetti-type formulas (\ref{eq:star}) for the homogeneous operators of first and second order.

First we assume that $P(\partial_z)$ is a general homogeneous
first order operator with real coefficients. For such operators we have
\begin{Prop}
 \label{pr:first}
 Let $P(\partial_z)=\sum_{k=1}^n a_k\partial_{z_k}$ with $a=(a_1,\dots,a_n)\in \RR^n$ and $\mu_{\delta,a}(y)=\delta(y-a)$,
 where $\delta$ is the Dirac delta.
 Then the generalised integral mean  $M_{\mu_{\delta,a}}(\varphi;z,t)$ satisfies a Pizzetti-type formula
 \begin{equation}
 \label{eq:first}
  M_{\mu_{\delta,a}}(\varphi;z,t)=\varphi(z+at)=\sum_{j=0}^{\infty}\frac{P^j(\partial_z)\varphi(z)}{j!}t^j.
 \end{equation}
\end{Prop}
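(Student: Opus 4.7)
The proposition consists of two equalities, and I would attack them independently. The first, $M_{\mu_{\delta,a}}(\varphi;z,t)=\varphi(z+at)$, is essentially tautological: substituting $\mu_{\delta,a}(y)=\delta(y-a)$ into the defining formula (\ref{eq:generalised}) and using the sifting property of the Dirac delta gives
\[
 M_{\mu_{\delta,a}}(\varphi;z,t)=\int_{\RR^n}\varphi(z+ty)\,\delta(y-a)\,dy=\varphi(z+at).
\]
Before doing even this, I would briefly check that $\mu_{\delta,a}$ fits Definition \ref{df:generalised}: it is a finite (complex) Borel measure of total mass $1$ supported in the closed ball $B(0,R)$ for any $R\geq|a|$, so the mean is well defined.

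The second equality is the Taylor expansion of $\varphi$ around $z$ evaluated at the point $z+at$, reorganised by total degree. Concretely, since $\varphi\in\Oo(D^n)$, for $|t|$ small enough
\[
 \varphi(z+at)=\sum_{\alpha\in\NN_0^n}\frac{\partial^{\alpha}\varphi(z)}{\alpha!}\,(at)^{\alpha}
 =\sum_{j=0}^{\infty}t^{j}\sum_{|\alpha|=j}\frac{a^{\alpha}\,\partial^{\alpha}\varphi(z)}{\alpha!}.
\]
By the multinomial theorem, $\bigl(\sum_{k=1}^{n}a_{k}\partial_{z_{k}}\bigr)^{j}=\sum_{|\alpha|=j}\frac{j!}{\alpha!}\,a^{\alpha}\partial^{\alpha}$, so the inner sum equals $P^{j}(\partial_{z})\varphi(z)/j!$, which yields (\ref{eq:first}).

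To finish, I would verify that the identity is a genuine Pizzetti-type formula in the sense of the definition preceding Remark \ref{re:integral_mean}: the operator $P(\partial_z)$ is homogeneous of order $p=1$, and the denominators $m(j)=j!=\Gamma_{1}(j)$ constitute a moment function of order $1=p$ with $m(0)=1$, so the expansion fits the template (\ref{eq:star}) with $t^{pj}=t^{j}$. There is no real obstacle in this proof — the only minor subtlety is making the Taylor expansion/multinomial reindexing explicit and recording that the coefficient sequence $j!$ has the correct order to match Definition \ref{df:generalised}.
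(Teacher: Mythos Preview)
Your proof is correct and follows essentially the same approach as the paper: the first equality via the sifting property of the Dirac delta, the second via Taylor expansion. The only cosmetic difference is that the paper Taylor-expands the one-variable function $t\mapsto\varphi(z+at)$ directly and uses the chain rule $\frac{d}{dt}\varphi(z+at)=P(\partial_z)\varphi(z+at)$, which bypasses the multinomial regrouping you carry out; also, your final cross-reference should be to the Pizzetti-type formula definition rather than to Definition~\ref{df:generalised}.
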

\begin{proof}
By the definition
$$M_{\mu_{\delta,a}}(\varphi;z,t)=\int_{\RR^n}\varphi(z+ty)\,d\mu_{\delta,a}(y)=\varphi(z+at).$$
Moreover, by the Taylor formula for the function $t\mapsto\varphi(z+at)$ we conclude that
 $$
 \varphi(z+at)=\sum_{j=0}^{\infty}\frac{\frac{d^j}{dt^j}\varphi(z+at)|_{t=0}}{j!}t^j=
 \sum_{j=0}^{\infty}\frac{P^j(\partial_z)\varphi(z)}{j!}t^j
 $$
 and (\ref{eq:first}) holds.
 \end{proof}
 
 For the Laplace operator $P(\partial_z)=\sum_{k=1}^n\partial^2_{z_k}=\Delta_z$ we get the classical formulas
 introduced for the first time in dimension $n=3$ by Pizzetti \cite{P}.
 Namely, we have
\begin{Prop}[{\cite[Theorem 3.1]{L2}}]
\label{pr:pizzetti}
 We assume that the measures $\mu_B(y)$ and $\mu_S(y)$ are defined by (\ref{eq:means}) and the sequences of numbers $m_B$ and $m_S$ are given by
 \begin{gather}
  \label{eq:numbers}
  m_B(j)=4^j(n/2+1)_j j! \quad\textrm{and}\quad m_S(j)=4^j(n/2)_jj!.
 \end{gather}
Then the solid $M_{\mu_B}(\varphi;z,t)$ and spherical $M_{\mu_S}(\varphi;z,t)$ means
 satisfy the Pizzetti formulas
 $$
 M_{\mu_B}(\varphi;z,t)=\nint{B(0,1)}\varphi(z+ty)\,dy=\sum_{j=0}^{\infty}\frac{\Delta_z^j\varphi(z)}{m_B(j)}t^{2j}
 $$
 and
 $$
 M_{\mu_S}(\varphi;z,t)=\nint{S(0,1)}\varphi(z+ty)\,dS(y)=\sum_{j=0}^{\infty}\frac{\Delta_z^j\varphi(z)}{m_S(j)}t^{2j}.
 $$
\end{Prop}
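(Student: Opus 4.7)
The plan is to proceed by direct Taylor expansion of $\varphi(z+ty)$, term-wise integration against the measures, and a combinatorial rearrangement that reassembles the iterated Laplacian. Since $\mu_S$ and $\mu_B$ are supported in $B(0,1)$ and $\varphi$ is holomorphic on $D^n$, for $|t|<\dist(z,\partial D^n)$ the expansion
$$
\varphi(z+ty)=\sum_{\alpha\in\NN_0^n}\frac{\partial_z^{\alpha}\varphi(z)}{\alpha!}t^{|\alpha|}y^{\alpha}
$$
converges absolutely and uniformly in $y\in B(0,1)$, so it may be integrated term by term against $d\mu_S$ or $d\mu_B$. Both $B(0,1)$ and $S(0,1)$ are invariant under each reflection $y_i\mapsto-y_i$, hence $\int y^{\alpha}\,d\mu(y)=0$ whenever some $\alpha_i$ is odd; only multi-indices of the form $\alpha=2\beta$ survive, which already explains why only even powers of $t$ appear on the right-hand side.

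The heart of the argument is the evaluation of the surviving even moments. Polar coordinates in $\RR^n$ together with $\int_0^\infty r^{a-1}e^{-r^2}dr=\frac{1}{2}\Gamma(a/2)$, applied to two evaluations of $\int_{\RR^n}y^{2\beta}e^{-|y|^2}dy$, yield the classical spherical-moment identity
$$
\int_{S(0,1)}y^{2\beta}\,dS(y)=\frac{2\,\Gamma(\beta_1+\tfrac{1}{2})\cdots\Gamma(\beta_n+\tfrac{1}{2})}{\Gamma(|\beta|+n/2)}.
$$
Using the half-integer values $\Gamma(\beta_i+1/2)=\sqrt{\pi}\,(2\beta_i)!/(4^{\beta_i}\beta_i!)$ and the surface-area formula $n\alpha(n)=2\pi^{n/2}/\Gamma(n/2)$, and writing $j=|\beta|$, this simplifies to
$$
\frac{1}{n\alpha(n)}\int_{S(0,1)}y^{2\beta}\,dS(y)=\frac{(2\beta)!}{4^j\,\beta!\,(n/2)_j}.
$$
The ball version follows from one further radial integration, $\int_{B(0,1)}y^{2\beta}dy=(2j+n)^{-1}\int_{S(0,1)}y^{2\beta}dS(y)$, whose additional factor $n/(2j+n)$ converts the Pochhammer symbol $(n/2)_j$ into $(n/2+1)_j$.

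The final step is purely combinatorial. The multinomial theorem applied to the commuting operators $\partial_{z_k}^2$ gives
$$
\Delta_z^j=\Bigl(\sum_{k=1}^n\partial_{z_k}^2\Bigr)^j=\sum_{|\beta|=j}\frac{j!}{\beta!}\,\partial_z^{2\beta},
$$
so the coefficient of $t^{2j}$ in $M_{\mu_S}(\varphi;z,t)$ collapses to
$$
\sum_{|\beta|=j}\frac{\partial_z^{2\beta}\varphi(z)}{(2\beta)!}\cdot\frac{(2\beta)!}{4^j\beta!(n/2)_j}=\frac{1}{4^j j!(n/2)_j}\sum_{|\beta|=j}\frac{j!}{\beta!}\partial_z^{2\beta}\varphi(z)=\frac{\Delta_z^j\varphi(z)}{m_S(j)},
$$
and the identical manipulation with $(n/2+1)_j$ in place of $(n/2)_j$ produces the solid-mean formula. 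The main obstacle is the classical spherical-moment identity together with the half-integer duplication formula for $\Gamma$; once these are in hand, both Pizzetti formulas reduce to routine bookkeeping driven by the multinomial expansion of $\Delta_z^j$.
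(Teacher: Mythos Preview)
Your proof is correct. The paper does not actually prove this proposition: it is stated with a citation to \cite[Theorem 3.1]{L2} and no argument is given in the text. So there is nothing to compare against directly, but your approach---Taylor expansion, symmetry to kill odd moments, the Gaussian-integral computation of $\int_{S(0,1)}y^{2\beta}\,dS(y)$, and the multinomial reassembly of $\Delta_z^j$---is the standard direct route and every step checks out. The paper's introduction does mention alternative proofs in the literature (Nicolesco's extension of Pizzetti to $\RR^n$, and Toma's argument via generating functions of the means), so other methods exist; yours is the most elementary and self-contained of the known approaches.
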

Moreover, using the Pizzetti formulas we conclude
 \begin{Prop}[{\cite[Theorems 4.5 and 4.6]{L2}} and {\cite[Theorem 4.1]{Mic9}}]
 \label{pr:laplace}
  Let $d\in\RR$ and $\widehat{u}\in\Oo[[t]]$ be the formal solution of the $n$-dimensional complex heat equation
  \begin{eqnarray*}
(\partial_t - \Delta_z)u(t,z) = 0,&& u(0,z)=\varphi(z)\in\mathcal{O}(D^n).
\end{eqnarray*}
Then:
\begin{itemize}
 \item $u\in\Oo(D^{n+1})$ if and only if $M_{\mu_B}(\varphi;z,t)\in\Oo^2(D^n\times \CC)$, and if and only if
 $M_{\mu_S}(\varphi;z,t)\in\Oo^2(D^n\times \CC)$,
 \item $\widehat{u}$ is $1$-summable in the direction $d$ if and only if
$M_{\mu_B}(\varphi;z,t)\in\Oo^2(D^n\times (\widehat{S}_{d/2}\cup\widehat{S}_{d/2+\pi}))$, and if and only if
$M_{\mu_S}(\varphi;z,t)\in\Oo^2(D^n\times (\widehat{S}_{d/2}\cup\widehat{S}_{d/2+\pi}))$.
\end{itemize}
\end{Prop}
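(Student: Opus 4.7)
The plan is to obtain Proposition \ref{pr:laplace} as an immediate specialisation of Theorem \ref{th:sum} to the case $P(\partial_z)=\Delta_z$, which is a homogeneous operator of order $p=2$. For this value of $p$ one has $\frac{p}{p-1}=2$, $\frac{1}{p-1}=1$, and the two relevant directions from Theorem \ref{th:sum} are $\frac{d}{2}$ and $\frac{d+2\pi}{2}=\frac{d}{2}+\pi$, so the exponents and the sectors in both bullets of the proposition will match automatically.

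First I would verify the hypothesis of Theorem \ref{th:sum}: Proposition \ref{pr:pizzetti} already provides the Pizzetti-type formula (\ref{eq:star}) for $M_{\mu_B}(\varphi;z,t)$ and $M_{\mu_S}(\varphi;z,t)$ with the sequences $m_B$ and $m_S$ from (\ref{eq:numbers}). Since $m_B(0)=m_S(0)=1$, the only thing to check for the first (convergence) bullet is that $m_B$ and $m_S$ are functions of order $2$ in the sense of (\ref{eq:order}); this follows by rewriting the Pochhammer symbols as ratios of gamma functions, namely
\[
m_B(j)=\frac{4^j\,\Gamma(n/2+1+j)\,\Gamma(1+j)}{\Gamma(n/2+1)},\qquad m_S(j)=\frac{4^j\,\Gamma(n/2+j)\,\Gamma(1+j)}{\Gamma(n/2)},
\]
and invoking the Stirling-type bounds built into Proposition \ref{pr:order} via Example \ref{ex:functions} and Proposition \ref{pr:moments} (each of the gamma factors has order $1$, so their product has order $2$). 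The convergence part of the proposition then drops out of the first statement of Theorem \ref{th:sum}.

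For the $1$-summability part I need the stronger property that $m_B$ and $m_S$ are generalised moment functions of order $2$. The cleanest way is to produce honest moment functions of order $2$ that differ from $m_B$, $m_S$ by a constant factor and a power $c^j$. Using Example \ref{ex:functions} with $a=1$, $k=1$ and taking $b=n/2+1$ (respectively $b=n/2$ when $n\geq 2$), the gamma factors $\Gamma(n/2+1+j)/\Gamma(n/2+1)$ and $\Gamma(1+j)$ are moment functions of order $1$; their product is a moment function of order $2$ by Proposition \ref{pr:moments}, and multiplying by $4^j$ shows that $m_B$ satisfies (\ref{eq:similar_to}) with $p_1\equiv p_2\equiv 1$ and $c=4$. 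The case of $m_S$ for $n\geq 2$ is identical; for $n=1$ the factor $\Gamma(1/2+j)$ does not fit Example \ref{ex:functions} directly, and the only genuine issue in the whole proof is to handle this exponent $b<1$. I would deal with it by shifting, writing $\Gamma(3/2+j)=(1/2+j)\Gamma(1/2+j)$: then $\widetilde{m}_S(j):=\Gamma(3/2+j)\Gamma(1+j)$ is a moment function of order $2$ by the same reasoning, and the polynomial identity $(1/2+j)\,m_S(j)=\frac{4^j}{\Gamma(1/2)}\widetilde{m}_S(j)$ realises (\ref{eq:similar_to}) with $p_1(j)=1/2+j$, $p_2(j)=1/\Gamma(1/2)$, $c=4$.

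Having verified the generalised moment function hypothesis, the second bullet follows directly from the summability statement of Theorem \ref{th:sum}: there exist disc-sectors $\widehat{S}_{d/2}$ and $\widehat{S}_{d/2+\pi}$ on which $M_{\mu_B}(\varphi;z,t)$ (respectively $M_{\mu_S}(\varphi;z,t)$) belongs to $\Oo^2$, which is exactly the condition $M \in \Oo^2(D^n\times(\widehat{S}_{d/2}\cup\widehat{S}_{d/2+\pi}))$ in the statement. The only non-routine step in the plan is thus the verification that $m_S$ is a generalised moment function of order $2$ in low dimension; every other ingredient is already packaged in Propositions \ref{pr:pizzetti}, \ref{pr:moments} and Theorem \ref{th:sum}.
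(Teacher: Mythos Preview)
Your proposal is correct and follows the same route as the paper: Proposition~\ref{pr:laplace} is derived from Theorem~\ref{th:sum} and Proposition~\ref{pr:pizzetti} once one checks that $m_B$ and $m_S$ are generalised moment functions of order~$2$. The only difference is in how that last verification is carried out. You invoke Example~\ref{ex:functions} directly with the parameter $b=n/2+1$ (resp.\ $b=n/2$) to recognise $\Gamma(n/2+1+u)$ and $\Gamma(n/2+u)$ as moment functions of order~$1$, treating $n=1$ for $m_S$ separately via the shift $(1/2+j)\Gamma(1/2+j)=\Gamma(3/2+j)$. The paper instead splits according to the parity of $n$, writing $m_B(j)=p_{B1}(j)(2j)!$ for odd~$n$ and $m_B(j)=p_{B2}(j)4^j(j!)^2$ for even~$n$ (and analogously for $m_S$), so that the comparison moment function is the more familiar $\Gamma(1+2u)$ or $\Gamma^2(1+u)$. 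Your argument has the advantage of being uniform in~$n$ apart from the single boundary case; the paper's has the advantage of producing completely explicit polynomial factors and of relying only on the standard instances $a=b=1$ of Example~\ref{ex:functions}. Either way the conclusion is immediate from Theorem~\ref{th:sum}.
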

\begin{Rem}
 Observe that
\[
  m_B(j)=\left\{
  \begin{array}{lll}
     p_{B1}(j)(2j)! & \textrm{for} & \textrm{odd}\ n\\
     p_{B2}(j)4^j (j!)^2 & \textrm{for} & \textrm{even}\ n
  \end{array}
  \right.
\]
with polynomials $p_{B1}(j)=\frac{1}{n!!}(2j+n)(2j+n-2)\cdots(2j+1)$
and $p_{B2}(j)=\frac{1}{n!!}(2j+n)(2j+n-2)\cdots(2j+2)$. Analogously 
\[
  m_S(j)=\left\{
  \begin{array}{lll}
     p_{S1}(j)(2j)! & \textrm{for} & \textrm{odd}\ n\\
     p_{S2}(j)4^j (j!)^2 & \textrm{for} & \textrm{even}\ n
  \end{array}
  \right.
\]
with polynomials $p_{S1}(j)=\frac{1}{(n-2)!!}(2j+n-2)(2j+n-4)\cdots(2j+1)$
and $p_{S2}(j)=\frac{1}{(n-2)!!}(2j+n-2)(2j+n-4)\cdots(2j+2)$. 
Moreover by Example \ref{ex:functions} and Proposition \ref{pr:moments}
we see that $\Gamma(1+2u)$ and $\Gamma^2(1+u)$ are moment functions of order $2$. Hence
$m_B(u)$ and $m_S(u)$ are generalised moment functions of order $2$ and Proposition \ref{pr:laplace} is an easy consequence
of Theorem \ref{th:sum} and Proposition \ref{pr:pizzetti}.
\end{Rem}

We may generalise the above results to the case when $P(\partial_z)=\sum_{i,j=1}^n a_{ij}\partial^2_{z_i z_j}$ is a homogeneous elliptic
operator of order 
$2$ with real coefficients. So, we may assume that $A=(a_{ij})_{i,j=1}^n$ is a symmetric nonsingular positive-definite real matrix.
Then there exists an orthogonal real matrix $C$ such that
 $\Lambda=C^TAC$ is a diagonal matrix with positive entries on the main diagonal.
 Then
 \begin{eqnarray*}
   P(\partial_z)\varphi(z)=\Delta_w\tilde{\varphi}(w) & \textrm{for}\ w = \Lambda^{-1/2}C^Tz &
   \textrm{and}\ \tilde{\varphi}(w)=\varphi(C\Lambda^{1/2}w).
 \end{eqnarray*}
  
 Applying the above change of variables to Proposition \ref{pr:pizzetti} we get
 \begin{Prop}
 \label{pr:elliptic}
 Let us assume that 
 \begin{gather}
  \label{eq:measure_el}
  \mu_B(y)=\frac{d(\Lambda^{-1/2}C^Ty)}{\alpha(n)}=\frac{dy}{|C\Lambda^{1/2}|\alpha(n)}\quad\textrm{and}\quad
  \mu_S(y)=\frac{dS(\Lambda^{-1/2}C^Ty)}{n\alpha(n)},
 \end{gather}
 where $|C\Lambda^{1/2}|$ is the determinant of the matrix $C\Lambda^{1/2}$, 
 $\alpha(n)=\pi^{n/2}/\Gamma(1+n/2)$ is the volume of the $n$-dimensional unit ball $B(0,1)$,
$dy$ is the Lebesgue measure on $\RR^n$ and $dS(y)$ is the surface measure on $S(0,1)$. 
 Then the generalised solid $M_{\mu_B}(\varphi;z,t)$ and spherical $M_{\mu_S}(\varphi;z,t)$ means
 satisfy Pizzetti-type formulas
 $$
 M_{\mu_B}(\varphi;z,t)=\rint{C\Lambda^{1/2}(B(0,1))}\varphi(z+ty)\,dy=\nint{B(0,1)}\varphi(z+tC\Lambda^{1/2}y)\,dy
 =\sum_{j=0}^{\infty}\frac{P(\partial_z)^j\varphi(z)}{m_B(j)}t^{2j}
 $$
 and
 $$
 M_{\mu_S}(\varphi;z,t)=\nint{S(0,1)}\varphi(z+tC\Lambda^{1/2}y)\,dS(y)=\sum_{j=0}^{\infty}\frac{P(\partial_z)^j\varphi(z)}{m_S(j)}t^{2j},
 $$
 where $m_B(j)$ and $m_S(j)$ are defined by (\ref{eq:numbers}).
\end{Prop}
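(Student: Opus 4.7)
The plan is to reduce the claim to the classical Pizzetti formulas of Proposition \ref{pr:pizzetti} via the diagonalising linear change of variables introduced immediately above the proposition. Set $\tilde\varphi(w) := \varphi(C\Lambda^{1/2}w)$, so that $z = C\Lambda^{1/2}w$. The chain-rule identity $P(\partial_z)\varphi(z) = \Delta_w\tilde\varphi(w)$ stated in the preceding paragraph iterates (since $C\Lambda^{1/2}$ is a fixed linear map) to $P^j(\partial_z)\varphi(z) = \Delta_w^j\tilde\varphi(w)$ for every $j\in\NN_0$.

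First I would apply the solid-mean part of Proposition \ref{pr:pizzetti} to $\tilde\varphi$:
$$\nint{B(0,1)}\tilde\varphi(w+ty)\,dy = \sum_{j=0}^{\infty}\frac{\Delta_w^j\tilde\varphi(w)}{m_B(j)}\,t^{2j}.$$
Since $\tilde\varphi(w+ty) = \varphi(C\Lambda^{1/2}w + tC\Lambda^{1/2}y) = \varphi(z + tC\Lambda^{1/2}y)$, this gives at once
$$\nint{B(0,1)}\varphi(z+tC\Lambda^{1/2}y)\,dy = \sum_{j=0}^{\infty}\frac{P^j(\partial_z)\varphi(z)}{m_B(j)}\,t^{2j},$$
which is the middle-to-right equality of the claim. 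The remaining solid-mean identity follows from the further linear substitution $y' = C\Lambda^{1/2}y$: its Jacobian determinant is $|C\Lambda^{1/2}|$ and the image of $B(0,1)$ is the ellipsoid $C\Lambda^{1/2}(B(0,1))$, so comparison with the definition (\ref{eq:measure_el}) of $\mu_B$ identifies the resulting integral with $M_{\mu_B}(\varphi;z,t)$.

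The spherical case is entirely parallel. I would apply the spherical Pizzetti formula from Proposition \ref{pr:pizzetti} to $\tilde\varphi$ and rewrite the integrand as $\varphi(z+tC\Lambda^{1/2}y)$. To identify this with $M_{\mu_S}(\varphi;z,t)$ I would unpack the definition of $\mu_S$ in (\ref{eq:measure_el}): substituting $y' = C\Lambda^{1/2}y$ (equivalently $y = \Lambda^{-1/2}C^T y'$) in
$$\int_{\RR^n} f(y)\,d\mu_S(y) = \frac{1}{n\alpha(n)}\int f(y)\,dS(\Lambda^{-1/2}C^T y)$$
yields exactly $\nint{S(0,1)}f(C\Lambda^{1/2}y')\,dS(y')$, closing the identification.

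The only step requiring attention is the spherical integration, because the surface measure on the ellipsoid $C\Lambda^{1/2}(S(0,1))$ does \emph{not} transform by a plain Jacobian factor. However, $\mu_S$ in (\ref{eq:measure_el}) is \emph{defined} as the pullback of the normalised uniform measure on $S(0,1)$ by the linear map $\Lambda^{-1/2}C^T$, so no area/coarea formula is needed — the correct transformation rule is built into the definition of $\mu_S$. Beyond this bookkeeping, the argument is a direct transcription of the classical Pizzetti identities through the diagonalising change of variables, and no genuine analytic obstacle arises.
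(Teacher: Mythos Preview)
Your argument is correct and is exactly the one the paper intends: the proposition is obtained by applying the diagonalising change of variables $w=\Lambda^{-1/2}C^Tz$ to the classical Pizzetti formulas of Proposition~\ref{pr:pizzetti}, and you have carried this out carefully, including the observation that $\mu_S$ is \emph{defined} as a pullback so no coarea computation is required. There is nothing to add.
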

Hence, as in Proposition \ref{pr:laplace} we conclude
  \begin{Th}
  \label{th:elliptic}
  Let $d\in\RR$, $P(\partial_z)=\sum_{i,j=1}^n a_{ij}\partial^2_{z_i z_j}$ be a homogeneous elliptic
operator of order 
$2$ with real coefficients and $\widehat{u}$ be the formal solution of
  \begin{eqnarray*}
(\partial_t - P(\partial_z))u(t,z) = 0,&& u(0,z)=\varphi(z)\in\mathcal{O}(D^n),
\end{eqnarray*}
Moreover, let $M_{\mu_B}(\varphi;z,t)$ and $M_{\mu_S}(\varphi;z,t)$ be generalised integral means with measures defined by (\ref{eq:measure_el}).
Then: 
 \begin{itemize}
  \item $u\in\Oo(D^{n+1})$ if and only if $M_{\mu_B}(\varphi;z,t)\in\Oo^2(D^n\times \CC)$, and if and only if
  $M_{\mu_S}(\varphi;z,t)\in\Oo^2(D^n\times \CC)$,
  \item $\widehat{u}$ is $1$-summable in the direction $d$ if and only if
  $M_{\mu_B}(\varphi;z,t)\in\Oo^2(D^n\times (\widehat{S}_{d/2}\cup\widehat{S}_{d/2+\pi}))$, and if and only if
  $M_{\mu_S}(\varphi;z,t)\in\Oo^2(D^n\times (\widehat{S}_{d/2}\cup\widehat{S}_{d/2+\pi}))$.
 \end{itemize}
\end{Th}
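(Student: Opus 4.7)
The plan is to obtain Theorem \ref{th:elliptic} as a direct application of Theorem \ref{th:sum} to the Pizzetti-type formula established in Proposition \ref{pr:elliptic}. Since $P(\partial_z)$ is a homogeneous operator of order $p=2$, the Gevrey exponent $p/(p-1)$ appearing in Theorem \ref{th:sum} equals $2$ and the summability index $1/(p-1)$ equals $1$, matching the exponents and the index of summability stated in the theorem.

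Before I can invoke the summability half of Theorem \ref{th:sum}, I need to verify that the sequences $m_B$ and $m_S$ given in (\ref{eq:numbers}) are \emph{generalised} moment functions of order $2$, i.e.\ that they satisfy (\ref{eq:similar_to}) against some genuine moment function of order $2$. This is exactly the content of the remark preceding the theorem: splitting into the parity of $n$ and factoring out an explicit polynomial, one writes $m_B(j)=p_{B1}(j)(2j)!$ in the odd case or $m_B(j)=p_{B2}(j)4^j(j!)^2$ in the even case, and compares with $\Gamma(1+2u)$ respectively $\Gamma^2(1+u)$; both are moment functions of order $2$, the first directly from Example \ref{ex:functions} and the second by Proposition \ref{pr:moments} (as a product of two moment functions of order $1$). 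The same factorisations apply verbatim to $m_S$. With this verified, the first bullet of the theorem follows at once from the convergence half of Theorem \ref{th:sum}, since $u\in\Oo(D^{n+1})$ is just a rephrasing of convergence of $\widehat{u}$ for sufficiently small $|t|$.

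For the second bullet, the summability half of Theorem \ref{th:sum} asks for disc-sectors $\widehat{S}_{(d+2k\pi)/p}$ for $k=0,\dots,p-1$; with $p=2$ these are precisely the disc-sectors bisected by $d/2$ and by $d/2+\pi$, and the condition $M_{\mu}(\varphi;z,t)\in\Oo^{2}(D^n\times\widehat{S}_{d/2})$ together with the analogous condition on $\widehat{S}_{d/2+\pi}$ is plainly equivalent to membership in $\Oo^{2}(D^n\times(\widehat{S}_{d/2}\cup\widehat{S}_{d/2+\pi}))$, because the two disc-sectors share the common disc $D$. This yields exactly the statement of the second bullet, first for $M_{\mu_B}$ and then identically for $M_{\mu_S}$.

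The main obstacle is mostly bookkeeping: identifying the correct exponents, confirming that the sequences $m_B$ and $m_S$ qualify as generalised moment functions of order $2$ so that Proposition \ref{pr:5} and hence the summability half of Theorem \ref{th:sum} apply, and matching the direction convention. The substantive inputs — the Pizzetti-type expansion in Proposition \ref{pr:elliptic} (obtained from the classical case by the linear change of variables $w=\Lambda^{-1/2}C^Tz$) and the general summability criterion in Theorem \ref{th:sum} — are already available, so no additional analytic work is required beyond this verification.
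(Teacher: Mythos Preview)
Your proposal is correct and follows essentially the same route as the paper: the theorem is deduced by feeding the Pizzetti-type expansion of Proposition \ref{pr:elliptic} into Theorem \ref{th:sum}, after checking via the parity-splitting in the Remark that $m_B$ and $m_S$ are generalised moment functions of order $2$. The paper does not spell out the direction/disc-sector bookkeeping you include, but it is the same argument.
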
 

\section{Higher order homogeneous operators}
In the previous section we have found the generalised integral means satisfying Pizzetti-type formulas (\ref{eq:star})
for first order homogeneous operators and second order elliptic homogeneous operators.
In this section we will consider the homogeneous operators $P(\partial_z)$ of order $p>2$. We will show that
for such operators it is impossible to find any generalised integral means satisfying (\ref{eq:star}). To this end we use the
following results:
\begin{Lem}[Zalcman theorem {\cite[Theorem 1]{Z}}]
 \label{le:Zalcman}
   Let $\mu$ be a finite complex Borel measure of compact support on $\RR^n$,
   $$
   F(\zeta)=\int_{\RR^n}e^{-i(\zeta\cdot y)}\,d\mu(y)
   $$
   be the Fourier-Laplace transform of $\mu$ and $\varphi\in\Oo(D^n)$.
   Then for each $z\in D^n$ we have
   \begin{equation*}
   M_{\mu}(\varphi;z,t)=\int_{\RR^n}\varphi(z+yt)\,d\mu(y)=F(it\partial_z)\varphi(z)
   \end{equation*}
   for all $t\in\CC$ for which the integral exists and the right-hand side converges. 
  \end{Lem}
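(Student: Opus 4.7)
The plan is to compute both sides by Taylor expansion and verify they agree term by term. Because $\mu$ has compact support in some ball $B(0,R)\subset\RR^n$, its Fourier--Laplace transform $F(\zeta)=\int_{\RR^n}e^{-i(\zeta\cdot y)}\,d\mu(y)$ is entire on $\CC^n$, and differentiating under the integral sign gives
\begin{equation*}
\partial_\zeta^\alpha F(0)=\int_{\RR^n}(-iy)^\alpha\,d\mu(y)=(-i)^{|\alpha|}\int_{\RR^n}y^\alpha\,d\mu(y),
\end{equation*}
so the moments of $\mu$ are encoded as $\int_{\RR^n}y^\alpha\,d\mu(y)=i^{|\alpha|}\partial_\zeta^\alpha F(0)$ for every multi-index $\alpha\in\NN_0^n$.

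Next, since $\varphi\in\Oo(D^n)$, I expand $\varphi(z+yt)$ in its multi-index Taylor series around $z$:
\begin{equation*}
\varphi(z+yt)=\sum_{\alpha\in\NN_0^n}\frac{t^{|\alpha|}y^\alpha}{\alpha!}\partial_z^\alpha\varphi(z).
\end{equation*}
For $z$ in a fixed compact subset of $D^n$ and $|t|$ small enough that $z+ty$ stays in $D^n$ for every $y\in B(0,R)$, this series converges absolutely and uniformly on $\mathrm{supp}\,\mu$. Integrating term by term against $d\mu(y)$ (justified by dominated convergence on the compact support) and substituting the moment identity yields
\begin{equation*}
M_\mu(\varphi;z,t)=\sum_{\alpha}\frac{t^{|\alpha|}}{\alpha!}\partial_z^\alpha\varphi(z)\int_{\RR^n}y^\alpha\,d\mu(y)=\sum_{\alpha}\frac{(it)^{|\alpha|}\partial_\zeta^\alpha F(0)}{\alpha!}\partial_z^\alpha\varphi(z).
\end{equation*}
On the other hand, expanding $F$ in its (everywhere convergent) Taylor series $F(\zeta)=\sum_{\alpha}\frac{\partial_\zeta^\alpha F(0)}{\alpha!}\zeta^\alpha$ and substituting $\zeta=it\partial_z$ gives, by the very definition of $F(it\partial_z)$,
\begin{equation*}
F(it\partial_z)\varphi(z)=\sum_{\alpha}\frac{\partial_\zeta^\alpha F(0)}{\alpha!}(it)^{|\alpha|}\partial_z^\alpha\varphi(z),
\end{equation*}
which coincides term by term with the previous expression. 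Hence the two sides agree for small $|t|$.

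The main obstacle is extending this identity to every $t\in\CC$ for which the integral exists and the right-hand side converges, as stated in the lemma. For this I would appeal to analytic continuation: the map $t\mapsto M_\mu(\varphi;z,t)$ is holomorphic wherever the integral converges (again by dominated convergence with the compactly supported measure $\mu$), and $t\mapsto F(it\partial_z)\varphi(z)$ is holomorphic on its domain of convergence by construction. Since both functions are holomorphic on a common connected neighbourhood of $t=0$ and agree there, they agree on the intersection of their domains of holomorphy, which gives the lemma in full.
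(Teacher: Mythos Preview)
Your argument is correct and is essentially the standard proof: identify the Taylor coefficients of $F$ with the moments of $\mu$, expand $\varphi(z+ty)$ around $z$, integrate term by term against the compactly supported measure, and then extend from small $|t|$ by analytic continuation in $t$. Note, however, that the paper itself does not supply a proof of this lemma; it is merely quoted as a known result from Zalcman's paper (hence the attribution in the lemma heading), so there is no in-paper proof to compare against. Your write-up would serve perfectly well as the omitted proof.
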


  \begin{Lem}[Paley-Wiener-Schwartz theorem {\cite[Theorem 1.7.7]{H}}] 
  \label{le:P-W-S}
  $F(z)\in\Oo(\CC^n)$ is a Fourier-Laplace transform of a distribution
   $v$ with compact support (${\rm supp}\,v\subset B(0,r)$) if and only if
   there exist $C>0$ and $N\in\NN$ such that
   \begin{equation}
    \label{eq:P-W-S}
     |F(\zeta)|\leq C(1+|\zeta|)^N e^{r|{\rm Im}\,(\zeta)|}.
   \end{equation}
   \end{Lem}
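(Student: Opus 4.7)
The statement is an ``if and only if'', so I would treat the two implications separately. The forward direction (compactly supported $v$ yields an entire function satisfying (\ref{eq:P-W-S})) is essentially a bookkeeping exercise based on the continuity of $v$ as a functional on smooth test functions and on differentiating under the distribution. The reverse direction is the substantive half: given the growth bound, one must reconstruct $v$ by inverse Fourier transform and then extract compact support, which forces a complex contour shift.

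\textbf{Forward direction.} I would define $F(\zeta):=\langle v_y, e^{-iy\cdot\zeta}\rangle$. Holomorphy in $\zeta$ follows by differentiating under the distributional pairing, legitimate because $e^{-iy\cdot\zeta}$ and all its $\zeta$-derivatives are jointly continuous in $(y,\zeta)$ and holomorphic in $\zeta$. For the growth estimate, fix $\varepsilon>0$, choose a cutoff $\chi_\varepsilon\in C_0^\infty(B(0,r+\varepsilon))$ equal to $1$ near ${\rm supp}\,v$, and apply the seminorm estimate for compactly supported distributions: there exist $N\in\NN$ and $C_\varepsilon>0$ with
$$|\langle v,\phi\rangle|\leq C_\varepsilon\sum_{|\alpha|\leq N}\sup_{y\in B(0,r+\varepsilon)}|\partial^\alpha\phi(y)|.$$
Taking $\phi(y)=\chi_\varepsilon(y)e^{-iy\cdot\zeta}$ and expanding by Leibniz gives a bound of the shape $C(1+|\zeta|)^N e^{(r+\varepsilon)|{\rm Im}\,\zeta|}$, after which one absorbs the $\varepsilon$ by a simple split between the regions $|{\rm Im}\,\zeta|\leq 1$ and $|{\rm Im}\,\zeta|\geq 1$.

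\textbf{Reverse direction.} Given $F\in\Oo(\CC^n)$ satisfying (\ref{eq:P-W-S}), the trace $F|_{\RR^n}$ is polynomially bounded, hence tempered, so I would put $v:=(2\pi)^{-n}\mathcal{F}^{-1}(F|_{\RR^n})\in\mathcal{S}'(\RR^n)$. To prove ${\rm supp}\,v\subset B(0,r)$ I would regularise: pick $\rho\in C_0^\infty(B(0,1))$ with $\int\rho=1$, set $\rho_\varepsilon(y)=\varepsilon^{-n}\rho(y/\varepsilon)$, and study $v_\varepsilon:=v\ast\rho_\varepsilon$, whose Fourier transform is $F\cdot\hat\rho_\varepsilon$. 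The already-proved forward direction applied to $\rho_\varepsilon$ gives
$$|F(\zeta)\hat\rho_\varepsilon(\zeta)|\leq C_{N'}(1+|\zeta|)^{-N'}e^{(r+\varepsilon)|{\rm Im}\,\zeta|}\quad\text{for every }N'\in\NN.$$
Writing $v_\varepsilon(y)=(2\pi)^{-n}\int_{\RR^n}e^{iy\cdot\xi}F(\xi)\hat\rho_\varepsilon(\xi)\,d\xi$ and shifting the contour to $\xi+it\,y/|y|$ for $|y|>r+\varepsilon$ and $t>0$, the estimate above yields
$$|v_\varepsilon(y)|\leq C'_{N'}\,e^{-t(|y|-r-\varepsilon)},$$
so letting $t\to\infty$ gives $v_\varepsilon\equiv 0$ outside $B(0,r+\varepsilon)$. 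Finally $v_\varepsilon\to v$ in $\mathcal{S}'$ as $\varepsilon\to 0$, whence ${\rm supp}\,v\subset B(0,r)$.

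\textbf{Main obstacle.} The delicate point is the contour shift in the reverse direction. On $\RR^n$ alone $F$ only has polynomial growth, so naive displacement of the integration contour is illegal: the rapid decay created by the factor $\hat\rho_\varepsilon$ is exactly what legitimises Cauchy's theorem and kills the contributions at infinity uniformly on horizontal strips. Organising this regularisation-plus-shift argument carefully, and then showing that the uniform support statement for $v_\varepsilon$ passes to the limit $\varepsilon\to 0$, is the heart of the proof.
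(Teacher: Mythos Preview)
The paper does not prove this lemma at all: it is quoted verbatim from H\"ormander with the citation \cite[Theorem 1.7.7]{H} and used as a black box in the proof of Theorem~\ref{th:homo}. So there is no ``paper's own proof'' to compare against; your sketch is essentially the classical argument found in H\"ormander's text (mollify, shift contour, pass to the limit), and the overall strategy is sound.

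One point to tighten in your forward direction: the phrase ``absorb the $\varepsilon$ by a simple split between the regions $|\mathrm{Im}\,\zeta|\leq 1$ and $|\mathrm{Im}\,\zeta|\geq 1$'' is not quite accurate as stated. A fixed split with a fixed $\varepsilon$ cannot kill the factor $e^{\varepsilon|\mathrm{Im}\,\zeta|}$ on the unbounded region. What actually works is to let $\varepsilon$ depend on $\zeta$: construct the cutoffs $\chi_\varepsilon$ so that $\|\partial^\alpha\chi_\varepsilon\|_\infty\leq C_\alpha\varepsilon^{-|\alpha|}$, then for $|\mathrm{Im}\,\zeta|\geq 1$ choose $\varepsilon=1/|\mathrm{Im}\,\zeta|$. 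This makes $e^{\varepsilon|\mathrm{Im}\,\zeta|}=e$ while the blown-up constant $C_\varepsilon\lesssim\varepsilon^{-N}=|\mathrm{Im}\,\zeta|^{N}$ is absorbed into the polynomial factor (at the cost of doubling $N$). This is exactly H\"ormander's device, and once you make it explicit your argument is complete.
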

  
  \begin{Lem}[Phragm\'en-Lindel\"of theorem, {\cite{P-L}}]
   \label{le:P-L}
   Let $d\in\RR$, $\alpha<\pi$ and $F\in\Oo^1(S_d(\alpha))$. We assume that
   $|F(\zeta)|< M $ for $\zeta\in\partial S_d(\alpha)$. Then $|F(\zeta)|< M $ for $\zeta\in S_d(\alpha)$.
  \end{Lem}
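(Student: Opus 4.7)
The plan is to use the standard auxiliary-function argument. After rotating $\zeta \mapsto e^{id}\zeta$ I may assume $d=0$, so the sector is $S_0(\alpha) = \{re^{i\theta} : r>0,\ |\theta|<\alpha/2\}$. Since $\alpha < \pi$, I fix an exponent $\beta$ with $1 < \beta < \pi/\alpha$, and for each $\varepsilon > 0$ introduce
\[
G_\varepsilon(\zeta) := F(\zeta)\exp(-\varepsilon\zeta^\beta),
\]
using the principal branch of $\zeta^\beta$. For $\zeta = re^{i\theta}$ in $S_0(\alpha)$ one has $\RE(\zeta^\beta) = r^\beta\cos(\beta\theta) \geq r^\beta\cos(\beta\alpha/2) =: c\,r^\beta$ with $c > 0$, since $\beta\alpha/2 < \pi/2$.

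On the two boundary rays $\theta = \pm\alpha/2$ we have $|e^{-\varepsilon\zeta^\beta}|\leq 1$, so the hypothesis yields $|G_\varepsilon(\zeta)|\leq |F(\zeta)| \leq M$. On the other hand, $F \in \Oo^1(S_0(\alpha))$ provides constants $A,B$ with $|F(\zeta)| \leq A\exp(B|\zeta|)$ on each slightly smaller closed subsector, so
\[
|G_\varepsilon(\zeta)| \leq A\exp(Br - \varepsilon c\, r^\beta),
\]
which tends to $0$ uniformly as $r = |\zeta| \to \infty$, because $\beta > 1$.

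I then apply the maximum modulus principle to $G_\varepsilon$ on the truncated region $S_0(\alpha) \cap \{|\zeta| < R\}$ for large $R$. Its boundary consists of the two rays (where $|G_\varepsilon| \leq M$) together with a circular arc of radius $R$ (where $|G_\varepsilon|$ is bounded by some $\eta(R) \to 0$), so $|G_\varepsilon(\zeta)| \leq \max(M, \eta(R))$ throughout; letting $R \to \infty$ gives $|G_\varepsilon(\zeta)| \leq M$ on $S_0(\alpha)$. Sending $\varepsilon \to 0$ produces $|F(\zeta)| \leq M$. To recover the strict inequality, observe that if $|F(\zeta_0)| = M$ were attained at some interior $\zeta_0$, the maximum modulus principle would force $F$ to be constant with $|F|\equiv M$, contradicting the strict boundary bound; hence $|F(\zeta)| < M$ throughout $S_0(\alpha)$.

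The only delicate point is the calibration of $\beta$, which must simultaneously satisfy $\beta > 1$ (so that $\exp(-\varepsilon\zeta^\beta)$ overwhelms the first-order exponential growth of $F$) and $\beta\alpha/2 < \pi/2$ (so that $\exp(-\varepsilon\zeta^\beta)$ actually decays in every direction of the closed sector). The assumption $\alpha < \pi$ is exactly what makes both requirements compatible; without it the method breaks down, which is why the conclusion cannot be extended to sectors of opening $\geq \pi$.
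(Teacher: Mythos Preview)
Your argument is the classical Phragm\'en--Lindel\"of auxiliary-function proof and is correct. Note that the paper does not supply its own proof of this lemma: it is quoted as a named classical result with a reference to the original source, so there is nothing in the paper to compare against.

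One small point worth tightening: the space $\Oo^1(S_d(\alpha))$, as defined in the paper, gives the bound $|F(\zeta)|\le A e^{B|\zeta|}$ only on each proper closed subsector $\overline{S_d(\widetilde\alpha)}$ with $\widetilde\alpha<\alpha$, not on the full closure. When you apply the maximum modulus principle on $S_0(\alpha)\cap\{|\zeta|<R\}$, the circular arc meets the boundary rays, so strictly speaking you need a bound on $|G_\varepsilon|$ along the whole arc, including its endpoints. This is harmless here because the hypothesis $|F|<M$ on $\partial S_0(\alpha)$ already controls $F$ (hence $G_\varepsilon$) near the boundary rays by continuity; alternatively one can run the argument on a slightly smaller subsector and let it exhaust $S_0(\alpha)$. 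In the paper's application (Theorem~\ref{th:homo}) the function in question is in fact entire, so the issue does not arise at all.
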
 

Now we are ready to state the main result of this section:  
  \begin{Th}
  \label{th:homo}
   If $p>2$, $P(\partial_z)$ is a homogeneous operator of order $p$ and $m$ is a function of order $p$ 
   then 
   \begin{equation*}
    \sum_{j=0}^{\infty}\frac{P^j(\partial_z)\varphi(z)}{m(j)}t^{pj}
   \end{equation*}
   is not a generalised integral mean for any Borel measure $\mu$ of compact support in $\RR^n$.
  \end{Th}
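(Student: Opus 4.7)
The plan is to assume that such a compactly supported Borel measure $\mu$ exists and to derive a contradiction by testing the resulting identity on exponentials. Setting $t=0$ in the assumed equality $M_\mu(\varphi;z,t)=\sum_j P^j(\partial_z)\varphi(z)t^{pj}/m(j)$ and using that a generalised integral mean forces $\int d\mu=1$ gives $m(0)=1$. Write $R>0$ for a radius with $\mathrm{supp}\,\mu\subset B(0,R)$. Applying Zalcman's theorem (Lemma~\ref{le:Zalcman}) yields $M_\mu(\varphi;z,t)=F(it\partial_z)\varphi(z)$, where $F$ is the Fourier--Laplace transform of $\mu$. Inserting the eigenfunction $\varphi(z)=e^{\zeta\cdot z}$ and cancelling $e^{\zeta\cdot z}$ from both sides reduces the hypothesis to the functional identity
\[
F(it\zeta)=E_m\bigl(P(\zeta)\,t^p\bigr),\qquad \zeta\in\CC^n,\ t\in\CC,
\]
where $E_m(w):=\sum_{j\ge 0}w^j/m(j)$.

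Next I would extract a boundedness estimate on $E_m$ along rays. Since a nonzero holomorphic polynomial on $\CC^n$ cannot vanish on the totally real subspace $\RR^n$, there exists $\zeta\in\RR^n$ with $P(\zeta)\neq 0$. For $t=iy$ with $y\in\RR$ the point $it\zeta=-y\zeta$ lies in $\RR^n$, and the elementary bound $|F(\xi)|\le\|\mu\|$ for $\xi\in\RR^n$ (the $\mathrm{Im}\,\zeta=0$ case of Lemma~\ref{le:P-W-S}, with the polynomial factor harmlessly absent since $\mu$ is a measure) gives
\[
\bigl|E_m\bigl(P(\zeta)\,i^p y^p\bigr)\bigr|\le\|\mu\|\qquad\text{for every } y\in\RR.
\]
In other words, $E_m$ is bounded by $\|\mu\|$ along a single ray through the origin when $p$ is even and along two opposite rays when $p$ is odd.

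The decisive step is a Phragm\'en--Lindel\"of argument applied to the auxiliary entire function
\[
\widetilde E(z):=E_m(z^p)=\sum_{j\ge 0}\frac{z^{pj}}{m(j)}.
\]
Since $m$ is of order $p$ (Proposition~\ref{pr:order}), $E_m$ has exponential growth of order $1/p$, and consequently $\widetilde E\in\Oo^1(\CC)$. Pulling the bounded-rays set back through the map $z\mapsto z^p$ produces $p$ rays (respectively $2p$ rays) emanating from the origin, evenly spaced with angular gap $2\pi/p$ (respectively $\pi/p$). Because $p>2$, both of these gaps are strictly less than $\pi$, so Lemma~\ref{le:P-L} applies in each of the resulting sectors: the constant bound $\|\mu\|$ on the two bounding rays together with $\widetilde E\in\Oo^1$ forces $|\widetilde E|\le\|\mu\|$ throughout the sector. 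Patching the finitely many sectors yields $\sup_{z\in\CC}|\widetilde E(z)|\le\|\mu\|$, so by Liouville $\widetilde E$ is constant; but $\widetilde E(z)=1+z^p/m(1)+z^{2p}/m(2)+\cdots$ has strictly positive coefficients $1/m(j)>0$ and is manifestly non-constant. This contradiction completes the argument.

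The hard part is precisely the Phragm\'en--Lindel\"of step: the hypothesis $p>2$ is used exactly to force every sector opening below $\pi$, which is the condition Lemma~\ref{le:P-L} requires. At the borderline $p=2$ the openings become exactly $\pi$ and the reasoning collapses, in complete accordance with the existence of the classical Pizzetti formula for the Laplacian (Proposition~\ref{pr:pizzetti}).
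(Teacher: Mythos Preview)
Your proof is correct and follows essentially the same strategy as the paper's: reduce via Zalcman's theorem to an identity for the Fourier--Laplace transform, exploit the $p$-fold rotational symmetry together with boundedness of $F$ on real arguments, and apply Phragm\'en--Lindel\"of in sectors of opening $2\pi/p<\pi$ to force a contradiction. The only differences are cosmetic---you package the auxiliary function as $\widetilde E(z)=E_m(z^p)$ and finish with Liouville, whereas the paper uses $f(t)=F(t,\dots,t)$ together with the full Paley--Wiener--Schwartz polynomial bound and a polynomial-versus-exponential clash---and you are slightly more careful in picking $\zeta\in\RR^n$ with $P(\zeta)\neq 0$, while the paper tacitly takes $\zeta=(1,\dots,1)$.
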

  \begin{proof}
  First observe that for every homogeneous operator $P(\partial_z)$ of order $p$
  $$
  F(it\partial_z)\varphi(z)=\sum_{j=0}^{\infty}\frac{P^j(\partial_z)\varphi(z)}{m(j)}t^{pj}
    $$
    if and only if
    \begin{equation}
     \label{eq:F}
     F(\zeta)=\sum_{j=0}^{\infty}\frac{P^j(-i\zeta)}{m(j)}.
    \end{equation}
   Hence by Lemmas \ref{le:Zalcman} and \ref{le:P-W-S} it is sufficient to show that the function
   $F(\zeta)$ defined by (\ref{eq:F}) does not satisfy the estimate (\ref{eq:P-W-S}).
   \par
   To the contrary, let us suppose that $F(\zeta)$ satisfies (\ref{eq:P-W-S}).
   We introduce an auxiliary function
   \begin{gather}
    \label{eq:def_f}
    f(t):=F(t,\dots,t) \qquad \textrm{for} \qquad t\in\CC. 
   \end{gather}  
   By (\ref{eq:P-W-S}), if $\arg t = 0$ then the function $f(t)$
   has a polynomial growth at infinity, i.e. there exist $C>0$ and $N\in\NN$ such that
   \begin{equation}
    \label{eq:polynomial} 
    |f(t)| \leq C(1+|t|)^N\quad 
   \end{equation} 
   for every $t\in\CC$ with $\arg t=0$. Since $P(\cdot)$ is a homogeneous polynomial of order $p$, $f(t)$ is
   invariant under the rotation of the angle $\frac{2\pi}{p}$ (i.e. $f(t)=f(te^{\frac{2\pi i}{p}})$ for every $t\in\CC$).
   It means that $f(t)$ satisfies
   (\ref{eq:polynomial}) also for every $t\in\CC$ such that $\arg t=\frac{2k\pi}{p}$ for some $k\in\{0,\dots,p-1\}$.
   \par
   Next we fix the sector $\tilde{S}_k:=S_{\frac{(2k+1)\pi}{p}}(\frac{2\pi}{p})$ and $a_k\in\CC\setminus\overline{\tilde{S}_k}$ for
   $k=0,\dots,p-1$. Then the function
   \begin{gather*}
   f_k(t):=\frac{f(t)}{(t-a_k)^N}\quad\textrm{for}\quad t\in\overline{\tilde{S}_k}
   \end{gather*}
   satisfies the following conditions:
   \begin{enumerate}
    \item[a)] there exists $M_k>0$ such that $|f_k(t)|\leq M_k$ for every $t\in\partial \tilde{S}_k$,
    \item[b)] $f_k(t)\in O^1(\tilde{S}_k)$.
   \end{enumerate}
   Moreover, the opening of $\tilde{S}_k$ is equal to $\frac{2\pi}{p}$ and $\frac{2\pi}{p}<\pi$ for $p>2$. It means that the assumptions
   of Lemma \ref{le:P-L}
   are satisfied. Consequently, by Lemma \ref{le:P-L}
   \begin{gather*}
    |f_k(t)|\leq M_k \quad \textrm{for every} \quad t\in \tilde{S}_k \quad \textrm{and} \quad k=0,\dots,p-1.
   \end{gather*}
   Hence there exist $\tilde{M}_k<\infty$ ($k=0,\dots,p-1$) satisfying 
   \begin{gather*}
   |f(t)|\leq \tilde{M}_k(1+|t|)^N \quad \textrm{for every} \quad t\in \overline{\tilde{S}_k} \quad \textrm{and} \quad k=0,\dots,p-1.
   \end{gather*}
   Taking $M=\sup\{\tilde{M}_0,\dots,\tilde{M}_{p-1}\}$ we conclude that
   \begin{eqnarray}
    \label{eq:growth}
    |f(t)|\leq M(1+|t|)^N &\textrm{for every}& t\in\CC.
   \end{eqnarray}
   On the other hand, if $P(-i,\dots,-i)=re^{i\phi}$, $m(j)\leq A B^j (pj)!$ and
   \begin{gather*}
    t=e^{-\frac{i\phi}{p}}R \qquad \textrm{for} \qquad R>0,
   \end{gather*}
   then
   \begin{eqnarray*}
    f(t)& = &\sum_{j=0}^{\infty}\frac{e^{-i\phi j} P^j(-i,\dots,-i)R^{pj}}{m(j)}=\sum_{j=0}^{\infty}\frac{(R\sqrt[p]{r})^{pj}}{m(j)}\\
    &\geq& \frac{1}{A} 
      \sum_{j=0}^{\infty}\frac{(R\sqrt[p]{r/B})^{pj}}{(pj)!}\geq ae^{bR}
    \end{eqnarray*}
    for some $a,b>0$. Hence $f(t)$ is of exponential growth as $R\to\infty$, contrary to (\ref{eq:growth}). It means that
    $F(\zeta)$ does not satisfy (\ref{eq:P-W-S}), which completes the proof.
    \end{proof}
    
\section{Quasi-homogeneous operators of higher order}
We extend the results of the previous section to the case of quasi-homogeneous operators using the ideas of Pokrovskii \cite{Po}.
\begin{Df}
  Let $N=(N_1,\dots,N_n)\in\NN^n$ and $p\in\NN_0$. A polynomial $P(\zeta)$, $\zeta\in\CC^n$, (respectively an operator $P(\partial_z)$)
  is said to be
  \emph{quasi-homogeneous of type $N$ and of order $p$} if $P(\zeta)=\sum a_k\zeta^k\not\equiv 0$, where the sum is taken over the set of
  all multi-indices $k=(k_1,\dots,k_n)\in\NN_0^n$ such that $|kN|=k_1N_1+\cdots+k_nN_n=p$ and $\zeta^k:=\zeta_1^{k_1}\cdots\zeta_n^{k_n}$.
\end{Df}
Observe that every homogeneous operator is quasi-homogeneous of type $(1,\dots,1)$.
More interesting example is given by the heat operator $P(\partial_z)=\partial_{z_1}-\partial_{z_2}^2-\cdots-\partial_{z_n}^2$,
which is a quasi-homogeneous operator of type $(2,1,\dots,1)$ and of order $2$.
  
For quasi-homogeneous operators it is convenient to consider the following $N$-version of generalised integral means
\begin{Df}
   Let $\mu$ be a finite complex Borel measure supported in the closed ball $B(0,R)$
(for some $R>0$) in $\RR^n$ of total mass $1$ (i.e. $\int_{\RR^n}\,d\mu(y)=1$). Moreover we assume that $\varphi$ is a continuous function
  on a domain $\Omega\subset \RR^n$, $x\in\Omega$ and $r>0$ is so small that $x+r^Ny\in\Omega$ for very $y\in B(0,R)$,
  where $r^Ny=r^{N_1}y_1+\cdots+r^{N_n}y_n$.
  Then the \emph{$N$-generalised integral mean $M^N_{\mu}(\varphi;x,r)$
  of $\varphi$} is defined by
  \begin{equation*}
    M_{\mu}^N(\varphi;x,r):=\int_{\RR^n}\varphi(x+r^Ny)\,d\mu(y).
  \end{equation*}
\end{Df}

For the $N$-generalised integral means we have the following version of Lemma \ref{le:Zalcman}
\begin{Lem}[see \cite{Po}]
   \label{le:Po}
   Let $\mu$ be a finite complex Borel measure of compact support on $\RR^n$,
   $$
   F(\zeta)=\int_{\RR^n}e^{-i(\zeta\cdot y)}\,d\mu(y)
   $$
   be the Fourier-Laplace transform of $\mu$ and $\varphi\in\Oo(D^n)$.
   Then for each $z\in D^n$ we have
   \begin{equation*}
   M^N_{\mu}(\varphi;z,t)=\int_{\RR^n}\varphi(z+t^Ny)\,d\mu(y)=F(it^{N}\partial_z)\varphi(z)
   \end{equation*}
   for all $t\in\CC$ for which the integral exists and the right-hand side converges. 
\end{Lem}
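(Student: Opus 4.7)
The plan is to mimic the proof of the classical Zalcman lemma (Lemma~\ref{le:Zalcman}), with the anisotropic dilation $y\mapsto t^N y := (t^{N_1}y_1,\dots,t^{N_n}y_n)$ replacing the isotropic one $y\mapsto ty$. Everything reduces to expanding both sides as power series in $t$ and matching coefficients term by term.

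First I would expand $\varphi$ in its convergent Taylor series about $z$:
\[
\varphi(z+w)=\sum_{\alpha\in\NN_0^n}\frac{\partial_z^{\alpha}\varphi(z)}{\alpha!}w^{\alpha},
\]
valid for $w$ in some polydisc around $0$. Substituting $w=t^Ny$ and using the elementary identity $(t^Ny)^{\alpha}=t^{N\cdot\alpha}y^{\alpha}$ with $N\cdot\alpha:=\sum_{k=1}^n N_k\alpha_k$, I obtain
\[
\varphi(z+t^Ny)=\sum_{\alpha}\frac{\partial_z^{\alpha}\varphi(z)}{\alpha!}\,t^{N\cdot\alpha}\,y^{\alpha}.
\]
Since $\mathrm{supp}\,\mu\subset B(0,R)$ is compact and $\varphi\in\Oo(D^n)$, for $|t|$ small enough the series converges uniformly in $y$ on the support of $\mu$, so Fubini/Lebesgue allow exchange of sum and integral, giving
\[
M^N_{\mu}(\varphi;z,t)=\sum_{\alpha}\frac{\partial_z^{\alpha}\varphi(z)}{\alpha!}\,t^{N\cdot\alpha}\int_{\RR^n}y^{\alpha}\,d\mu(y).
\]

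On the other hand, $F$ is entire (by Paley--Wiener), with Taylor expansion
\[
F(\zeta)=\sum_{\alpha}\frac{(-i)^{|\alpha|}}{\alpha!}\,\zeta^{\alpha}\int_{\RR^n}y^{\alpha}\,d\mu(y).
\]
Formally substituting $\zeta=it^N\partial_z$, using $(-i)^{|\alpha|}(i)^{|\alpha|}=1$ and the commuting identity $(it^N\partial_z)^{\alpha}\varphi(z)=i^{|\alpha|}t^{N\cdot\alpha}\partial_z^{\alpha}\varphi(z)$, yields exactly the same series as above. Thus $M^N_{\mu}(\varphi;z,t)=F(it^N\partial_z)\varphi(z)$ as formal power series in $t$, and hence as holomorphic functions on a neighbourhood of $t=0$. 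The identity principle then extends the equality to every $t$ for which both sides are defined and the right-hand side converges.

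The main obstacle, which is essentially bookkeeping rather than substance, is keeping the anisotropic exponents $N\cdot\alpha$ straight and justifying the interchange of sum and integral on the left-hand side; once one observes that the series $\sum_\alpha\|\partial_z^{\alpha}\varphi(z)/\alpha!\|\cdot|t|^{N\cdot\alpha}\sup_{y\in B(0,R)}|y^{\alpha}|$ converges for small $|t|$ (which follows from the Cauchy estimates for $\varphi$), the remainder is a formal identification of coefficients identical to that in Zalcman's original argument.
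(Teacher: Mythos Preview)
Your argument is correct and is the natural adaptation of Zalcman's proof: replace the isotropic dilation $y\mapsto ty$ by the anisotropic $y\mapsto t^N y$, expand both sides as Taylor series, and match coefficients using $(t^N y)^{\alpha}=t^{N\cdot\alpha}y^{\alpha}$ and $(-i)^{|\alpha|}(i)^{|\alpha|}=1$. The paper itself gives no proof of this lemma---it simply cites Pokrovskii~\cite{Po}---and your write-up is precisely the expected argument behind that citation.
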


So, we can extend Theorem \ref{th:homo} to the quasi-homogeneous operators as follows
\begin{Th}
  \label{th:quasi}
   If $p>2$, $P(\partial_z)$ is a quasi-homogeneous operator of type $N$ and of order $p$ and $m$ is a function of order $p$ 
   then 
   \begin{equation*}
    \sum_{j=0}^{\infty}\frac{P^j(\partial_z)\varphi(z)}{m(j)}t^{pj}
   \end{equation*}
   is not an $N$-generalised integral mean for any Borel measure $\mu$ of compact support in $\RR^n$.
\end{Th}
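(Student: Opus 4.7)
The plan is to mirror the proof of Theorem \ref{th:homo}, with Lemma \ref{le:Po} replacing Lemma \ref{le:Zalcman}. First I would set up the reduction to Paley-Wiener-Schwartz. Suppose for contradiction that $\sum_j P^j(\partial_z)\varphi(z)t^{pj}/m(j)$ equals $M_\mu^N(\varphi;z,t)$ for every $\varphi \in \Oo(D^n)$, for some compactly supported Borel measure $\mu$ on $\RR^n$. By Lemma \ref{le:Po} this forces $F(it^N\partial_z)\varphi(z) = \sum_j P^j(\partial_z)\varphi(z)\,t^{pj}/m(j)$, where $F = \widehat{\mu}$. A short computation exploiting quasi-homogeneity of $P$---namely, writing $w = it^N\zeta$ so that $-iw = t^N\zeta$ and $P(-iw) = t^p P(\zeta)$ since every monomial of $P$ satisfies $|kN|=p$---identifies
\[
    F(w) = \sum_{j=0}^\infty \frac{P^j(-iw)}{m(j)}.
\]
By Lemma \ref{le:P-W-S} the existence of $\mu$ forces this $F$ to satisfy (\ref{eq:P-W-S}), so it suffices to derive a contradiction from that assumption.

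Since $v \mapsto P(-iv)$ is a nonzero polynomial on $\RR^n$, I would pick $v \in \RR^n$ with $\alpha := P(-iv) \neq 0$ and analyse the auxiliary function
\[
    f(t) := F\bigl(v_1 t^{N_1},\dots,v_n t^{N_n}\bigr).
\]
Quasi-homogeneity again yields $P(-ivt^N) = \alpha t^p$, so
\[
    f(t) = \sum_{j=0}^\infty \frac{\alpha^j}{m(j)}\,t^{pj},
\]
and consequently $f(te^{2\pi i/p}) = f(t)$. Because $m$ is of order $p$ one has $m(j)\geq c^j\Gamma(1+pj)$, and a termwise comparison with the Mittag-Leffler function of index $1/p$ shows $f\in \Oo^{1}(\CC)$---the genuine exponential order of $f$ is $1$, not the weaker $N_{\max}$ that the Paley-Wiener-Schwartz estimate produces along the curve $(v_1 t^{N_1},\dots,v_n t^{N_n})$.

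Next I would invoke (\ref{eq:P-W-S}) to obtain polynomial bounds on a discrete family of rays. For real $t$ the argument of $F$ lies in $\RR^n$, so (\ref{eq:P-W-S}) gives $|f(t)|\leq C(1+|t|^{N_{\max}})^M$. The rotation invariance propagates the same polynomial bound to every ray $\arg t = 2k\pi/p$ for $k=0,\dots,p-1$, between which lie sectors $\tilde S_k = S_{(2k+1)\pi/p}(2\pi/p)$ of opening $2\pi/p<\pi$ (precisely because $p>2$). Applying Lemma \ref{le:P-L} to $f_k(t) := f(t)/(t-a_k)^{M'}$ for $a_k\notin\overline{\tilde S_k}$ and $M'$ chosen so that $f_k$ is bounded on $\partial \tilde S_k$, I would extend the polynomial bound from the boundary to each sector $\tilde S_k$, and hence to all of $\CC$: $|f(t)|\leq C'(1+|t|)^{M''}$ globally.

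The contradiction comes from testing $f$ along the ray $t = e^{-i\phi/p}R$, where $\phi := \arg\alpha$. On this ray every term $\alpha^j t^{pj} = |\alpha|^j R^{pj}$ is positive, so the bound $m(j)\leq AB^j(pj)!$ from Proposition \ref{pr:order} together with the roots-of-unity lower estimate $\sum_{j} x^{pj}/(pj)! \geq (2p)^{-1}e^{x}$ (valid for large $x$ once $p>2$, since the remaining roots satisfy $\cos(2\pi k/p)<1$) yields $|f(e^{-i\phi/p}R)|\geq a\,e^{bR}$ for some $a,b>0$, contradicting the polynomial bound. The main obstacle---and the one place where this proof is not a verbatim copy of the homogeneous case---is securing the intrinsic $\Oo^1$ bound on $f$: the Paley-Wiener-Schwartz estimate by itself only gives $\Oo^{N_{\max}}$ along the chosen curve, which is insufficient for Lemma \ref{le:P-L} once $N_{\max}\geq p/2$, and one must instead read the order-$1$ bound off the series representation of $f$ together with the order hypothesis on $m$.
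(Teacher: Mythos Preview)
Your proof is correct and follows exactly the paper's approach: reduce via Lemma \ref{le:Po} and Paley--Wiener--Schwartz to showing that $F(\zeta)=\sum_j P^j(-i\zeta)/m(j)$ violates (\ref{eq:P-W-S}), then study the one-variable restriction $f(t)=F(v_1t^{N_1},\dots,v_nt^{N_n})$ (the paper simply takes $v=(1,\dots,1)$) using the $2\pi/p$-rotation symmetry together with Phragm\'en--Lindel\"of. You are in fact more careful than the paper on two points it glosses over by writing ``repeat the proof of Theorem \ref{th:homo}'': you secure $P(-iv)\neq 0$ by an explicit choice of $v\in\RR^n$, and you correctly note that the $\Oo^1$ hypothesis required by Lemma \ref{le:P-L} must here be read off the series for $f$ and the order bound $m(j)\geq c^j(pj)!$, since the estimate (\ref{eq:P-W-S}) along the curve $t\mapsto (v_1t^{N_1},\dots,v_nt^{N_n})$ by itself yields only $\Oo^{N_{\max}}$.
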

\begin{proof}
  First, similarly to the proof of Theorem \ref{th:homo}, we observe that for quasi-homogeneous operators $P(\partial_z)$ of type $N$
  and of order $p$
  $$
  F(it^N\partial_z)\varphi(z)=\sum_{j=0}^{\infty}\frac{P^j(\partial_z)\varphi(z)}{m(j)}t^{pj}
    $$
    if and only if
  \begin{equation}
    \label{eq:quasi}
    F(\zeta)=\sum_{j=0}^{\infty}\frac{P^j(-i\zeta)}{m(j)},
  \end{equation}
  so by Lemmas \ref{le:P-W-S} and \ref{le:Po} it is sufficient to show that $F(\zeta)$ given by (\ref{eq:quasi}) does not satisfy
  (\ref{eq:P-W-S}). To this end we repeat the proof of Theorem \ref{th:homo} replacing  (\ref{eq:def_f}) by
  \begin{gather*}
    f(t):=F(t^N)=F(t^{N_1},\dots,t^{N_n}) \qquad \textrm{for} \qquad t\in\CC.
  \end{gather*}
\end{proof}

\section{Complex generalised integral means}
In this section we extend the definition of generalised integral means to the complex case. Namely we have
\begin{Df}
   Let $\mu$ be a finite complex Borel measure supported in the closed disc $\overline{D_R^n}$
(for some $R>0$) in $\CC^n$ of total mass $1$ (i.e. $\int_{\CC^n}\,d\mu(y)=1$). Moreover we assume that $\varphi$ is a continuous function
  on a domain $G\subset \CC^n$, $z\in G$ and $0<r<\dist(z,\partial G)/R$. Then the \emph{complex generalised integral mean
  $M_{\mu}(\varphi;z,r)$
  of $\varphi$} is defined by
  \begin{equation*}
    M_{\mu}(\varphi;z,r):=\int_{\CC^n}\varphi(z+ry)\,d\mu(y).
  \end{equation*}
\end{Df}

Using complex generalised integral means we are able to extend the results about homogeneous operators of first and second order.

In particular Proposition \ref{pr:first} holds in the case when $P(\partial_z)$ is a general homogeneous operator of order one with
complex variables (i.e $P(\partial_z)=\sum_{j=1}^n a_j\partial_{z_j}$ for some $a=(a_1,\dots,a_n)\in \CC^n$).

Analogously, Proposition \ref{pr:elliptic} and Theorem \ref{th:elliptic} are valid for
general homogeneous operators $P(\partial_z)$ of order two with complex coefficients
(i.e. $P(\partial_z)=\sum_{i,j=1}^n a_{ij}\partial^2_{z_i z_j}$, where $A=(a_{ij})_{i,j=1}^n$ is a symmetric nonsingular complex matrix).
In this case a diagonal matrix $\Lambda=C^TAC$ is not necessarily positive and consequently the measures $\mu_B$ and $\mu_S$ defined by
(\ref{eq:measure_el}) are supported respectively in the complex sets $C\Lambda^{1/2}(B(0,1))$ and $C\Lambda^{1/2}(S(0,1))$.

As an example we take the $n$-dimensional complex wave operator
 $$
 P(\partial_{z})=\partial_{z_1}^2-\sum_{k=2}^n\partial_{z_k}^2=\partial_{z_1}^2-\Delta_{z'}=\square_z,
 $$
 where  $z=(z_1,z_2,\dots,z_n)=(z_1,z')\in\CC^n$.
 Next we define the measure on
 $$
 B_{1}(0,1)=\{(x_1+iy_1,\dots,x_n+iy_n)\in\CC^n\colon y_1=x_2=\dots=x_n=0,\ x_1^2+y_2^2+\dots+ y_n^2\leq 1\}
 $$
 by $\mu_{B_{1}}(z)=\frac{d(z_1,-iz_2,\dots,-iz_n)}{\alpha(n)}$, 
 where $dy$ is the Lebesgue measure on $B(0,1)$,
 and the measure on
 $$
 S_{1}(0,1)=\{(x_1+iy_1,\dots,x_n+iy_n)\in\CC^n\colon y_1=x_2=\dots=x_n=0,\ x_1^2+y_2^2+\dots+ y_n^2=1\}
 $$
 by $\mu_{S_{1}}(z)=\frac{dS(z_1,-iz_2,\dots,-iz_n)}{n\alpha(n)}$,
 where $dS(y)$ is the surface measure on $S(0,1)$.
 
 Then the generalised solid $M_{\mu_{B_1}}(\varphi;z,t)$ and spherical $M_{\mu_{S_1}}(\varphi;z,t)$ means
 defined by
 \begin{equation*}
 M_{\mu_{B_1}}(\varphi;z,t)=\int_{B_{1}(0,1)}\varphi(z+ty)\,d\mu_{B_{1}}(y)=\nint{B(0,1)}\varphi(z+t(y_1,iy_2,\dots,iy_n))\,dy
 \end{equation*}
 and
 \begin{equation*}
 M_{\mu_{S_{1}}}(\varphi;z,t)=\int_{S_{1}(0,1)}\varphi(z+ty)\,d\mu_{S_{1}}(y)=\nint{S(0,1)}\varphi(z+t(y_1,iy_2,\dots,iy_n))\,dS(y)
 \end{equation*}
 satisfy Pizzetti-type formulas for the complex wave operator $\square_z$
 \begin{gather*}
 M_{\mu_{B_{1}}}(\varphi;z,t)= \sum_{j=0}^{\infty}\frac{\square_z^{j}\varphi(z)}{m_{B}(j)}t^{2j}\quad
 \textrm{and}\quad
 M_{\mu_{S_{1}}}(\varphi;z,t)=\sum_{j=0}^{\infty}\frac{\square_z^{j}\varphi(z)}{m_{S}(j)}t^{2j},
 \end{gather*}
 where $m_B(j)$ and $m_S(j)$ are defined by (\ref{eq:numbers}).
 
 So by Theorem \ref{th:sum} we get
 \begin{Cor}
 \label{co:wave} 
 Let $d\in\RR$ and $\widehat{u}=\widehat{u}(t,z)\in\Oo[[t]]$ be the formal solution
 of the initial value problem
  \begin{eqnarray*}
(\partial_t - \square_z)u = 0,&& u(0,z)=\varphi(z)\in\mathcal{O}(D^n).
\end{eqnarray*}
Then we conclude that:
\begin{itemize}
 \item $u\in\Oo(D^{n+1})$ if and only if $M_{\mu_{B_1}}(\varphi;z,t)\in\Oo^2(D^n\times \CC)$, and if and only if 
   $M_{\mu_{S_1}}(\varphi;z,t)\in\Oo^2(D^n\times \CC)$.
 \item $\widehat{u}$ is $1$-summable in the direction $d$ if and only if
   $M_{\mu_{B_1}}(\varphi;z,t)\in\Oo^{2}(D^n\times (\widehat{S}_{d/2}\cup\widehat{S}_{d/2+\pi}))$,
   and if and only if $M_{\mu_{S_{1}}}(\varphi;z,t)\in\Oo^{2}(D^n\times (\widehat{S}_{d/2}\cup\widehat{S}_{d/2+\pi}))$.
\end{itemize}
 \end{Cor}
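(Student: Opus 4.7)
The plan is to derive Corollary \ref{co:wave} as a direct application of Theorem \ref{th:sum} with $P(\partial_z) = \square_z$, which is a homogeneous operator of order $p = 2$. Two ingredients must be in place: a Pizzetti-type formula of the form (\ref{eq:star}) for $\square_z$ with coefficient functions $m_B$ and $m_S$, and a verification that both $m_B$ and $m_S$ are generalised moment functions of order $2$.

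To establish the Pizzetti-type formula, I would reduce to the classical elliptic case by the change of variables $w_1 = z_1$, $w_k = -i z_k$ for $k=2,\dots,n$. Under this substitution one has $\partial_{z_k} = -i\partial_{w_k}$, hence $\partial_{z_k}^2 = -\partial_{w_k}^2$, so $\square_z$ transforms into the Laplacian $\Delta_w$ acting on $\tilde\varphi(w) := \varphi(w_1, iw_2, \dots, iw_n)$. Applying Proposition \ref{pr:pizzetti} to $\tilde\varphi$ gives Pizzetti expansions in powers of $t^{2j}$ with coefficients $\Delta_w^j \tilde\varphi / m_B(j)$ and $\Delta_w^j \tilde\varphi / m_S(j)$; pushing the substitution through the integration yields precisely the definitions of $M_{\mu_{B_1}}(\varphi;z,t)$ and $M_{\mu_{S_1}}(\varphi;z,t)$ stated just before the corollary, and the coefficients become $\square_z^j \varphi(z) / m_B(j)$ and $\square_z^j \varphi(z) / m_S(j)$. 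This is exactly the Pizzetti-type formula (\ref{eq:star}) with $p=2$.

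For the second ingredient, the remark following Proposition \ref{pr:laplace} already shows that both $m_B(j)$ and $m_S(j)$ satisfy (\ref{eq:similar_to}) with respect to either $\Gamma(1+2j)$ or $\Gamma^2(1+j)$, both of which are moment functions of order $2$ (by Example \ref{ex:functions} and Proposition \ref{pr:moments}). Hence $m_B$ and $m_S$ are generalised moment functions of order $p=2$, so the second (summability) part of Theorem \ref{th:sum} is applicable.

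Applying Theorem \ref{th:sum} with $p=2$, one has $\frac{p}{p-1}=2$ and $\frac{1}{p-1}=1$; the finitely many directions $\frac{d+2k\pi}{p}$ for $k=0,\dots,p-1$ are exactly $d/2$ and $d/2 + \pi$, and the disc-sectors $\widehat{S}_{d/2} \cup \widehat{S}_{d/2+\pi}$ appear as required. The convergence criterion and the $1$-summability criterion in the corollary then follow verbatim. The only delicate point in the plan is checking that the formal substitution $w_k = -iz_k$ correctly turns the real integrals over $B(0,1)$ and $S(0,1)$ into integrals against the complex measures $\mu_{B_1}$ and $\mu_{S_1}$ supported on the sets $B_1(0,1)$ and $S_1(0,1)$; once this bookkeeping is handled, everything reduces to invoking the two theorems cited.
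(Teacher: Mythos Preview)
Your proposal is correct and follows essentially the same route as the paper: the paper establishes the Pizzetti-type formulas for $\square_z$ immediately before the corollary by specialising the complex-coefficient extension of Proposition~\ref{pr:elliptic} (which is precisely your change of variables $z\mapsto (z_1,iz_2,\dots,iz_n)$ reducing $\square_z$ to $\Delta_w$), and then states that the corollary follows from Theorem~\ref{th:sum}. Your explicit verification that $m_B$ and $m_S$ are generalised moment functions of order $2$ via the Remark after Proposition~\ref{pr:laplace} is exactly the ingredient the paper tacitly relies on.
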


Using complex generalised integral means we also obtain
\begin{Th}
 \label{th:s}
 Let $p,s\in\NN$. We assume that a complex generalised integral mean satisfies a Pizzetti-type formula
 $$
 M_{\mu_B}(\varphi;z,t)=\int_{B}\varphi(z+ty)\,d\mu_B(y)=\sum_{j=0}^{\infty}\frac{Q^j(\partial_z)\varphi(z)}{m(j)}t^{pj}
 $$
 for some measure $\mu_B$ supported in $B\subset\CC^n$, for some homogeneous operator $Q(\partial_z)$ of order $p$
 and for some function $m$ of order $p$.
 Moreover, we assume that the measure $\mu_B^s$ is defined by
 $$
 \mu_B^s(y):=\frac{1}{s}\sum_{k=0}^{s-1}\mu_B(e^{-\frac{2k\pi i}{ps}}y).
 $$
 Then the complex generalised integral mean $M_{\mu_B^s}(\varphi;z,t)$ satisfies a Pizzetti-type formula for the operator $Q^s(\partial_z)$
 \begin{multline*}
 M_{\mu_B^s}(\varphi;z,t)=\frac{1}{s}\sum_{k=0}^{s-1}\int_{e^{\frac{2\pi ki}{ps}}B}\varphi(z+ty)\,d\mu_B(e^{-\frac{2k\pi i}{ps}}y)\\=
 \frac{1}{s}\sum_{k=0}^{s-1}\int_{B}\varphi(z+e^{\frac{2\pi ki}{ps}}ty)\,d\mu_B(y)=
 \sum_{j=0}^{\infty}\frac{Q^{sj}(\partial_z)\varphi(z)}{m(sj)}t^{psj}.
 \end{multline*}
 Moreover, if  $\widehat{u}=\widehat{u}(t,z)\in\Oo[[t]]$
 is the formal solution of the initial value problem
  \begin{eqnarray*}
(\partial_t - Q^s(\partial_z))u = 0,&& u(0,z)=\varphi(z)\in\mathcal{O}(D^n),
\end{eqnarray*}
then we conclude that $u\in\Oo(D^{n+1})$ if and only if $M_{\mu_B^s}(\varphi;z,t)\in\Oo^{\frac{ps}{ps-1}}(D^n\times \CC)$.

If we additionally assume that $d\in\RR$ and $m$ is a generalised moment function of order $p$ then $\widehat{u}$ is $(ps-1)^{-1}$-summable in the direction $d$
if and only if
$M_{\mu_B^s}(\varphi;z,t)\in\Oo^{\frac{ps}{ps-1}}(D^n\times \widehat{S}_{\frac{d+2k\pi}{ps}})$ for $k=0,\dots,ps-1$.
 \end{Th}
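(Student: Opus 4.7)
The plan is to split the argument into two parts: first, establish the Pizzetti-type formula for $M_{\mu_B^s}$ with respect to the operator $Q^s(\partial_z)$; second, invoke Theorem \ref{th:sum} (with $p$ replaced by $ps$) to translate this into the asserted characterisations of convergence and $(ps-1)^{-1}$-summability of $\widehat{u}$.

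For the Pizzetti-type identity I would start from the definition of $\mu_B^s$ and perform in the $k$-th summand the change of variables $w = e^{-2\pi ki/(ps)} y$. This transforms $\int\varphi(z+ty)\,d\mu_B(e^{-2\pi ki/(ps)}y)$ into $\int\varphi(z+e^{2\pi ki/(ps)}tw)\,d\mu_B(w)$, so that
$$M_{\mu_B^s}(\varphi;z,t) = \frac{1}{s}\sum_{k=0}^{s-1} M_{\mu_B}\bigl(\varphi;\,z,\,e^{2\pi ki/(ps)}t\bigr).$$
Expanding every term by the assumed Pizzetti-type formula for $Q(\partial_z)$ and interchanging the two sums yields a double series in which the inner $k$-sum factors out as the character sum $\frac{1}{s}\sum_{k=0}^{s-1} e^{2\pi kij/s}$, which equals $1$ if $s\mid j$ and $0$ otherwise. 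After reindexing $j=sj'$ only the powers $t^{psj'}$ with coefficient $Q^{sj'}(\partial_z)\varphi(z)/m(sj')$ survive, which is exactly the claimed expansion. This is the only step requiring real content; the interchange of sums is justified by absolute convergence inherited from the given expansion of $M_{\mu_B}$.

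Next I would apply Theorem \ref{th:sum} to the homogeneous operator $Q^s(\partial_z)$ of order $ps$, with the role of the ``moment sequence'' played by $n(j):=m(sj)$. Since $m$ is a function of order $p$, the bound $c^j\Gamma(1+pj)\leq m(j)\leq C^j\Gamma(1+pj)$ gives at once $c^{sj}\Gamma(1+psj)\leq n(j)\leq C^{sj}\Gamma(1+psj)$, so $n$ is a function of order $ps$; this suffices for the convergence bullet of Theorem \ref{th:sum}. When $m$ is a generalised moment function of order $p$, the defining relation $p_1(j)m(j)=p_2(j)m_2(j)c^j$ with $m_2$ a moment function of order $p$ specialises to $p_1(sj)n(j)=p_2(sj)m_2(sj)c^{sj}$, and by Remark \ref{re:m_tilde} the sequence $m_2(sj)$ is a moment function of order $ps$; hence $n$ is a generalised moment function of order $ps$, which is precisely what the summability bullet of Theorem \ref{th:sum} requires.

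The main obstacle, such as it is, lies in the orthogonality step: one must correctly interpret the formal sum $\frac{1}{s}\sum_k \mu_B(e^{-2\pi ki/(ps)}y)$ as a Borel measure (with total mass $1$ and compact support in $e^{2\pi ki/(ps)}\overline{D_R^n}$), and justify the change of variables measure-theoretically; once this is done, the averaging over $p s$-th roots of unity does all the work and the remainder of the proof is bookkeeping.
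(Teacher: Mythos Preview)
Your proof is correct and follows essentially the same approach as the paper's: the paper establishes the Pizzetti-type identity via the same roots-of-unity orthogonality computation and then concludes by invoking Theorem~\ref{th:sum} together with Remark~\ref{re:m_tilde}. Your write-up is simply more explicit about the change of variables and about why $n(j)=m(sj)$ is a (generalised moment) function of order $ps$, a point the paper subsumes in its reference to Remark~\ref{re:m_tilde}.
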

 \begin{proof}
 We have
 $$
 M_{\mu_B^s}(\varphi;z,t)=\frac{1}{s}\sum_{k=0}^{s-1}\sum_{j=0}^{\infty}\frac{Q^j(\partial_z)\varphi(z)}{m(j)}(e^{\frac{2\pi ki}{s}}t^p)^j=
 \sum_{j=0}^{\infty}\frac{Q^{sj}(\partial_z)\varphi(z)}{m(sj)}t^{psj},$$ 
 which proves the first part of the theorem. The proof is completed by applying Theorem \ref{th:sum} and Remark \ref{re:m_tilde}.
\end{proof} 

 In particular, we obtain
 \begin{Cor}
   \label{co:laplace}
We assume that $s\in\NN$, $P(\partial_{z})=\Delta_z^s= (\sum_{k=1}^n\partial^2_{z_k})^s$ is the $s$-Laplace operator,
 \begin{gather*}
 \mu_B^s(y)=\frac{1}{s}\sum_{k=0}^{s-1}\mu_B(e^{-\frac{k\pi i}{s}}y)\quad\textrm{and}\quad
 \mu_S^s(y)=\frac{1}{s}\sum_{k=0}^{s-1}\mu_S(e^{-\frac{k\pi i}{s}}y),
 \end{gather*}
where the measures $\mu_B(y)$ and $\mu_S(y)$ are defined by (\ref{eq:means}).

 Then the generalised solid $M_{\mu_B^s}(\varphi;z,t)$ and spherical $M_{\mu_S^s}(\varphi;z,t)$ means satisfy 
 Pizzetti-type formulas for the operator $\Delta^s_z$
 $$
 M_{\mu_B^s}(\varphi;z,t)=\frac{1}{s}\sum_{k=0}^{s-1}\nint{B(0,1)}\varphi(z+e^{\frac{k\pi i}{s}}ty)\,dy
 =\sum_{j=0}^{\infty}\frac{\Delta_z^{sj}\varphi(z)}{m_B(sj)}t^{2sj}
 $$
 and
 $$
 M_{\mu_S^s}(\varphi;z,t)=\frac{1}{s}\sum_{k=0}^{s-1}\nint{S(0,1)}\varphi(z+e^{\frac{k\pi i}{s}}ty)\,dS(y)
 =\sum_{j=0}^{\infty}\frac{\Delta_z^{sj}\varphi(z)}{m_S(sj)}t^{2sj},
 $$
where $m_B$ and $m_S$ are given by (\ref{eq:numbers}).

Moreover, if we additionally assume that $d\in\RR$ and $\widehat{u}=\widehat{u}(t,z)\in\Oo[[t]]$
 is the formal solution of the initial value problem
  \begin{eqnarray*}
(\partial_t - \Delta_z^s)u = 0,&& u(0,z)=\varphi(z)\in\mathcal{O}(D^n),
\end{eqnarray*}
then we conclude that
\begin{itemize}
 \item $u\in\Oo(D^{n+1})$ if and only if $M_{\mu_B^s}(\varphi;z,t)\in\Oo^{\frac{2s}{2s-1}}(D^n\times \CC)$, and if and only if
$M_{\mu_S^s}(\varphi;z,t)\in\Oo^{\frac{2s}{2s-1}}(D^n\times \CC)$,
\item $\widehat{u}$ is $(2s-1)^{-1}$-summable in the direction $d$ if and only if
$M_{\mu_B^s}(\varphi;z,t)\in\Oo^{\frac{2s}{2s-1}}(D^n\times \widehat{S}_{\frac{d+2k\pi}{2s}})$ for $k=0,\dots,2s-1$, and if and only if
$M_{\mu_S^s}(\varphi;z,t)\in\Oo^{\frac{2s}{2s-1}}(D^n\times \widehat{S}_{\frac{d+2k\pi}{2s}})$ for $k=0,\dots,2s-1$.
\end{itemize}
\end{Cor}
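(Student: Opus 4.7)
The plan is to apply Theorem \ref{th:s} to the special case $Q(\partial_z)=\Delta_z$, which is a homogeneous operator of order $p=2$, combined with either the solid measure $\mu_B$ or the spherical measure $\mu_S$ from (\ref{eq:means}). By the classical Pizzetti expansions in Proposition \ref{pr:pizzetti}, both $M_{\mu_B}(\varphi;z,t)$ and $M_{\mu_S}(\varphi;z,t)$ already satisfy Pizzetti-type formulas for $\Delta_z$ with the sequences $m_B(j)$ and $m_S(j)$ from (\ref{eq:numbers}), so the input hypothesis of Theorem \ref{th:s} is met.

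Next I would verify that with $p=2$ the rotation factors $e^{-2k\pi i/(ps)}$ appearing in the definition of the measure $\mu_B^s$ in Theorem \ref{th:s} coincide with the factors $e^{-k\pi i/s}$ used in the statement of the corollary, and similarly for $\mu_S^s$. The first part of Theorem \ref{th:s} then directly produces the two displayed Pizzetti-type formulas for $M_{\mu_B^s}(\varphi;z,t)$ and $M_{\mu_S^s}(\varphi;z,t)$, because the iterated operator $Q^s(\partial_z)=\Delta_z^s$ is exactly the operator under consideration, and the indices $p s j = 2sj$ match the exponents of $t$ in the claimed expansions.

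For the assertions about holomorphy on $\CC$ and about $(2s-1)^{-1}$-summability, I would invoke the second part of Theorem \ref{th:s}. Its key hypothesis is that the underlying $m$ be a \emph{generalised} moment function of order $p=2$. This was already checked in the remark following Proposition \ref{pr:laplace}: the sequences $m_B(u)$ and $m_S(u)$ factor as a polynomial in $j$ times either $(2j)!$ or $4^j(j!)^2$, and by Example \ref{ex:functions} together with Proposition \ref{pr:moments} both $\Gamma(1+2u)$ and $\Gamma^2(1+u)$ are moment functions of order $2$. Substituting $p=2$ into the formulas of Theorem \ref{th:s} gives $\tfrac{ps}{ps-1}=\tfrac{2s}{2s-1}$, $(ps-1)^{-1}=(2s-1)^{-1}$ and $\tfrac{d+2k\pi}{ps}=\tfrac{d+2k\pi}{2s}$ for $k=0,\dots,2s-1$, which matches the statement of the corollary exactly.

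I do not anticipate a genuine obstacle: the corollary is essentially a specialisation of Theorem \ref{th:s}, and the only bookkeeping involves identifying the rotation angles and confirming that $m_B$ and $m_S$ qualify as generalised moment functions of order $2$. Both ingredients are already in place in the earlier sections of the paper, so the proof amounts to assembling these pieces in the correct order.
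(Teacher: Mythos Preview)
Your proposal is correct and follows exactly the approach the paper intends: the corollary is presented immediately after Theorem \ref{th:s} with the words ``In particular, we obtain'' and no separate proof, so it is simply the specialisation $Q(\partial_z)=\Delta_z$, $p=2$, with Proposition \ref{pr:pizzetti} supplying the input Pizzetti formula and the remark after Proposition \ref{pr:laplace} supplying the generalised-moment-function property of $m_B$ and $m_S$. Your bookkeeping of the rotation angles and exponents is accurate.
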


Analogously we get
 \begin{Cor}
  \label{co:first}
  We assume that $s\in\NN$, $P(\partial_z)=(\sum_{k=1}^na_k\partial_{z_k})^s$ with $a=(a_1,\dots,a_n)\in\CC^n$ and
 $\mu_{\delta,a}^s(y)=\frac{1}{s}\sum_{k=0}^{s-1}\delta(e^{-\frac{2k\pi i}{s}}y-a)$, where $\delta$ is the Dirac delta.
 Then the complex generalised integral mean $M_{\mu_{\delta,a}^s}(\varphi;z,t)$ satisfies a Pizzetti-type formula for $P(\partial_z)$
 $$M_{\mu_{\delta,a}^s}(\varphi;z,t)=\frac{1}{s}\sum_{k=0}^{s-1}\varphi(z+e^{\frac{2k\pi i}{s}}at)
 =\sum_{j=0}^{\infty}\frac{P^{j}(\partial_z)\varphi(z)}{(sj)!}t^{sj}.$$
 If we additionally assume that $s>1$, $d\in\RR$ and $\widehat{u}=\widehat{u}(t,z)\in\Oo[[t]]$
 is the formal solution of the initial value problem
  \begin{eqnarray*}
(\partial_t - P(\partial_z))u = 0,&& u(0,z)=\varphi(z)\in\mathcal{O}(D^n),
\end{eqnarray*} 
then we conclude that
\begin{itemize}
\item $u\in\Oo(D^{n+1})$ if and only if $M_{\mu_{\delta,a}^s}(\varphi;z,t)\in\Oo^{\frac{s}{s-1}}(D^n\times \CC)$,
\item $\widehat{u}$ is $(s-1)^{-1}$-summable in the direction $d$ if and only if
$M_{\mu_{\delta,a}^s}(\varphi;z,t)\in\Oo^{\frac{s}{s-1}}(D^n\times \widehat{S}_{\frac{d+2k\pi}{s}})$ for $k=0,\dots,s-1$.
\end{itemize}
\end{Cor}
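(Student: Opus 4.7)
The plan is to recognise the statement as a direct specialisation of Theorem \ref{th:s} with base operator of order $p=1$; the main task is therefore bookkeeping, namely identifying the correct inputs to Theorem \ref{th:s} and verifying its hypotheses. First I would set $Q(\partial_z) := \sum_{k=1}^n a_k\partial_{z_k}$, a homogeneous operator of order $p=1$ with complex coefficients, and take the Dirac measure $\mu_{\delta,a}(y) = \delta(y-a)$. By the complex extension of Proposition \ref{pr:first} (explicitly recorded at the start of this section) one has the Pizzetti-type formula
$$M_{\mu_{\delta,a}}(\varphi;z,t) = \varphi(z+at) = \sum_{j=0}^{\infty}\frac{Q^j(\partial_z)\varphi(z)}{j!}\,t^j,$$
with moment function $m(j) = j! = \Gamma_1(j)$. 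By Example \ref{ex:functions}, $\Gamma_1$ is a moment function of order $1$, hence a generalised moment function of order $1$, so both hypotheses of Theorem \ref{th:s} are met.

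Next I would feed the data $(Q,\mu_{\delta,a},m,p=1,s)$ into Theorem \ref{th:s}. The averaged measure produced there is
$$\mu_{\delta,a}^s(y) = \frac{1}{s}\sum_{k=0}^{s-1}\mu_{\delta,a}\bigl(e^{-2k\pi i/s}y\bigr) = \frac{1}{s}\sum_{k=0}^{s-1}\delta\bigl(e^{-2k\pi i/s}y - a\bigr),$$
which matches the measure in the corollary. Theorem \ref{th:s} then at once yields
$$M_{\mu_{\delta,a}^s}(\varphi;z,t) = \frac{1}{s}\sum_{k=0}^{s-1}\varphi\bigl(z + e^{2k\pi i/s}at\bigr) = \sum_{j=0}^{\infty}\frac{Q^{sj}(\partial_z)\varphi(z)}{(sj)!}\,t^{sj} = \sum_{j=0}^{\infty}\frac{P^j(\partial_z)\varphi(z)}{(sj)!}\,t^{sj},$$
since $P = Q^s$. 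This is exactly the Pizzetti-type formula asserted in the corollary.

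For the analytic/summable characterisations I would invoke the remaining conclusions of Theorem \ref{th:s}. With $p=1$, all quantities specialise as expected: $ps = s$, the growth exponent $\tfrac{ps}{ps-1}$ becomes $\tfrac{s}{s-1}$ (well defined precisely for $s>1$, which is the hypothesis of this part), the summability index $(ps-1)^{-1}$ becomes $(s-1)^{-1}$, and the directions $\tfrac{d+2k\pi}{ps}$ for $k=0,\dots,ps-1$ become $\tfrac{d+2k\pi}{s}$ for $k=0,\dots,s-1$. The required generalised-moment hypothesis is supplied by Remark \ref{re:m_tilde}: since $m = \Gamma_1$ is a moment function of order $1$, the shifted sequence $m(sj) = \Gamma_1(sj) = (sj)!$ is a (bona fide, hence generalised) moment function of order $s$. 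The stated characterisations follow. There is no real obstacle; the only conceptual point to check is that Proposition \ref{pr:first} is indeed available in the complex setting (i.e.\ that $Q$ may have complex coefficients and that $\mu_{\delta,a}$ is interpreted as a measure on $\CC^n$), which is exactly what is recorded at the beginning of this section.
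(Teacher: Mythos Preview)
Your proposal is correct and follows exactly the route the paper intends: Corollary~\ref{co:first} is stated as an immediate consequence (``Analogously we get'') of Theorem~\ref{th:s} applied with $Q(\partial_z)=\sum_k a_k\partial_{z_k}$, $p=1$, $m(j)=j!$, and the complex version of Proposition~\ref{pr:first} as input. One small streamlining: the generalised-moment hypothesis in Theorem~\ref{th:s} concerns $m$ itself (of order $p=1$), which is immediate from Example~\ref{ex:functions} since $m=\Gamma_1$; the detour through Remark~\ref{re:m_tilde} and $m(sj)$ is not needed to verify the hypothesis.
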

 \begin{Cor}
   \label{co:second}
   We assume that $s\in\NN$, $P(\partial_z)=(\sum_{i,j=1}^na_{ij}\partial_{z_i z_j}^2)^s$, where $A=(a_{ij})_{i,j=1}^n$ is a symmetric nonsingular
   complex matrix with the corresponding diagonal matrix $\Lambda$ (i.e. $\Lambda=C^TAC$ for some orthogonal complex matrix $C$).
   We also assume that
 \begin{gather*}
  \mu_B^s(y)=\frac{1}{s}\sum_{k=0}^{s-1}\mu_B(e^{-\frac{k\pi i}{s}}y)\quad\textrm{and}\quad
  \mu_S^s(y)=\frac{1}{s}\sum_{k=0}^{s-1}\mu_S(e^{-\frac{k\pi i}{s}}y),
 \end{gather*}
 where the measures $\mu_B$ and $\mu_S$ are defined by (\ref{eq:measure_el}).
 
 Then the complex generalised integral means $M_{\mu_B^s}(\varphi;z,t)$ and $M_{\mu_S^s}(\varphi;z,t)$ satisfy Pizzetti-type formulas for $P(\partial_z)$
 $$M_{\mu_B^s}(\varphi;z,t)=\int_{\CC^n}\varphi(z+ty)\,d\mu_B^s(y)=\sum_{j=0}^{\infty}\frac{P^{j}(\partial_z)\varphi(z)}{m_B(sj)}t^{2sj}$$
 and
 $$M_{\mu_S^s}(\varphi;z,t)=\int_{\CC^n}\varphi(z+ty)\,d\mu_S^s(y)=\sum_{j=0}^{\infty}\frac{P^{j}(\partial_z)\varphi(z)}{m_S(sj)}t^{2sj},$$
 where the sequences of numbers $m_B$ and $m_S$ are given by (\ref{eq:numbers}).
 
 If we additionally assume that $d\in\RR$ and $\widehat{u}=\widehat{u}(t,z)\in\Oo[[t]]$
 is the formal solution of the initial value problem
  \begin{eqnarray*}
(\partial_t - P(\partial_z))u = 0,&& u(0,z)=\varphi(z)\in\mathcal{O}(D^n),
\end{eqnarray*} 
then we conclude that
\begin{itemize}
\item $u\in\Oo(D^{n+1})$ if and only if $M_{\mu_B^s}(\varphi;z,t)\in\Oo^{\frac{2s}{2s-1}}(D^n\times \CC)$, and
if and only if $M_{\mu_S^s}(\varphi;z,t)\in\Oo^{\frac{2s}{2s-1}}(D^n\times \CC)$,
\item $\widehat{u}$ is $(2s-1)^{-1}$-summable in the direction $d$ if and only if
$M_{\mu_B^s}(\varphi;z,t)\in\Oo^{\frac{2s}{2s-1}}(D^n\times \widehat{S}_{\frac{d+2k\pi}{2s}})$ for $k=0,\dots,2s-1$, and
if and only if $M_{\mu_S^s}(\varphi;z,t)\in\Oo^{\frac{2s}{2s-1}}(D^n\times \widehat{S}_{\frac{d+2k\pi}{2s}})$ for $k=0,\dots,2s-1$.
\end{itemize}
  \end{Cor}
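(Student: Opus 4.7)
The plan is to derive Corollary \ref{co:second} as a direct application of Theorem \ref{th:s} to the elliptic operator of order $2$, after verifying that the setup satisfies the hypotheses of that theorem.

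First, I would recall that by Proposition \ref{pr:elliptic}, together with the remarks in the section on complex generalised integral means stating that Proposition \ref{pr:elliptic} extends verbatim to symmetric nonsingular complex matrices $A$, the homogeneous operator of order $p=2$
\[
Q(\partial_z):=\sum_{i,j=1}^n a_{ij}\partial^2_{z_iz_j}
\]
admits Pizzetti-type formulas with the measures $\mu_B$ and $\mu_S$ defined by (\ref{eq:measure_el}) (now understood in the complex sense) and with the sequences $m_B(j)$ and $m_S(j)$ from (\ref{eq:numbers}). Since $P(\partial_z)=Q^s(\partial_z)$, this puts us exactly in the situation of Theorem \ref{th:s} with the pair $(Q,p)=(Q,2)$.

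Next I would apply Theorem \ref{th:s} to $Q$ for both measures. The resulting averaged measure in Theorem \ref{th:s} is $\frac{1}{s}\sum_{k=0}^{s-1}\mu_B(e^{-\frac{2k\pi i}{ps}}y)$, which for $p=2$ reduces to $\frac{1}{s}\sum_{k=0}^{s-1}\mu_B(e^{-\frac{k\pi i}{s}}y)=\mu_B^s(y)$, and analogously for $\mu_S^s$. Theorem \ref{th:s} then yields immediately the Pizzetti-type formulas
\[
M_{\mu_B^s}(\varphi;z,t)=\sum_{j=0}^{\infty}\frac{P^j(\partial_z)\varphi(z)}{m_B(sj)}t^{2sj},\qquad
M_{\mu_S^s}(\varphi;z,t)=\sum_{j=0}^{\infty}\frac{P^j(\partial_z)\varphi(z)}{m_S(sj)}t^{2sj},
\]
for the operator $P(\partial_z)=Q^s(\partial_z)$ of order $ps=2s$.

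For the holomorphy and summability statements, I would invoke the second part of Theorem \ref{th:s}, which requires that the function $m$ appearing in the Pizzetti-type formula for $Q$ be a \emph{generalised} moment function of order $p=2$. This is the point that needs to be verified, and it has in fact been verified earlier: in the remark following Proposition \ref{pr:laplace} it is shown, using Example \ref{ex:functions} and Proposition \ref{pr:moments}, that both $m_B(u)$ and $m_S(u)$ satisfy the relation (\ref{eq:similar_to}) with the moment functions $\Gamma(1+2u)$ and $\Gamma^2(1+u)$ respectively, so they are generalised moment functions of order $2$. Consequently Theorem \ref{th:s} (together with Remark \ref{re:m_tilde}, which ensures that $j\mapsto m_B(sj)$ and $j\mapsto m_S(sj)$ are generalised moment functions of order $2s$) produces exactly the characterisations of convergence in $\Oo^{\frac{2s}{2s-1}}(D^n\times\CC)$ and of $(2s-1)^{-1}$-summability in the direction $d$ on the $2s$ disc-sectors $\widehat{S}_{\frac{d+2k\pi}{2s}}$, $k=0,\dots,2s-1$.

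The only step that is not purely formal is matching the geometry of the averaged measure with the one written in Corollary \ref{co:second}: one has to check that with $p=2$ the rotation phases $e^{-2k\pi i/(ps)}$ of Theorem \ref{th:s} coincide with the phases $e^{-k\pi i/s}$ used in the statement of the corollary. I expect this to be the main (but still elementary) bookkeeping obstacle; all remaining content is an immediate substitution into Theorem \ref{th:s}.
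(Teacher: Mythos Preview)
Your proposal is correct and follows exactly the route the paper intends: Corollary \ref{co:second} is stated without proof as an ``analogously we get'' consequence of Theorem \ref{th:s}, applied with $Q(\partial_z)=\sum a_{ij}\partial^2_{z_iz_j}$ and $p=2$, using the complex version of Proposition \ref{pr:elliptic} for the base Pizzetti formula and the remark after Proposition \ref{pr:laplace} for the fact that $m_B$ and $m_S$ are generalised moment functions of order $2$. Your bookkeeping check that the phases $e^{-2k\pi i/(ps)}$ reduce to $e^{-k\pi i/s}$ for $p=2$ is the only thing to verify, and you have done it.
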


\section{Final remarks}
In the presented paper, we are concerned with the application of the Pizzetti-type formulas to the characterisation of the convergent and summable formal power series
solutions of the equation (\ref{eq:1.1}) in terms of holomorphic properties of generalised integral means of the Cauchy data.

Another, maybe more trivial, application is indicated in Remark \ref{re:integral_mean}.
Namely, immediately by the Pizzetti type formula we get the mean-value property and
converse of the mean value property for a given operator $P(\partial_z)$.

The Pizzetti formula has also the applications in such different fields of interest as asymptotic solutions of PDEs, spherical harmonics, and numerical analysis.
For the deeper discussion about these applications we refer the reader to the introduction of \cite{Bo2} and the references therein.

We hope that new examples of Pizzetti type formulas in the present paper, especially Corollaries \ref{co:laplace}--\ref{co:second},
will allow to obtain new applications in the mentioned above fields of interest.

In particular, in author's opinion, one can apply the Pizzetti type formula for
the $s$-Laplace operator $\Delta^s$, given in Corollary \ref{co:laplace}, to the study of $s$-polyharmonic generalisations of spherical harmonics and zonal functions
in the spirit similar to \cite{Be-D-S}.

Moreover, using the ideas from Theorem \ref{th:s}, it seems to be possible to generalise the Pizzetti type expansion
for the mean value over a geodesic sphere in a Riemannian manifold from \cite{G-W} to an $s$-complex version of a geodesic sphere.

We are going to study these problems in the subsequent papers.

\subsection*{Acknowledgement}
The author would like to thank the anonymous referee for valuable comments and suggestions.

    \bibliographystyle{siam}
    \bibliography{summa}

\begin{thebibliography}{10}

\bibitem{B2}
{\sc W.~Balser}, {\em Formal power series and linear systems of meromorphic
  ordinary differential equations}, Springer-Verlag, New York, 2000.

\bibitem{B5}
\leavevmode\vrule height 2pt depth -1.6pt width 23pt, {\em Multisummability of
  formal power series solutions of partial differential equations with constant
  coefficients}, J. Differential Equations, 201 (2004), pp.~63--74.

\bibitem{Be-D-S}
{\sc A.~Bezubik, A.~D{\c{a}}browska, and A.~Strasburger}, {\em On spherical
  expansions of zonal functions on {E}uclidean spheres}, Arch. Math. (Basel),
  90 (2008), pp.~70--81.

\bibitem{Bo1}
{\sc A.~Bonfiglioli}, {\em Expansion of the {H}eisenberg integral mean via
  iterated {K}ohn {L}aplacians: a {P}izzetti-type formula}, Potential Anal., 17
  (2002), pp.~165--180.

\bibitem{Bo2}
\leavevmode\vrule height 2pt depth -1.6pt width 23pt, {\em Pizzetti's formula
  for {H}-type groups}, Potential Anal., 31 (2009), pp.~311--333.

\bibitem{Lio-Ro}
{\sc F.~{Da Lio} and L.~Rodino}, {\em A {P}izzetti-type formula for the heat
  operator}, Arch. Math. (Basel), 87 (2006), pp.~261--271.

\bibitem{G-W}
{\sc A.~Gray and T.~J. Willmore}, {\em Mean-value theorems for {R}iemannian
  manifolds}, Proc. Roy. Soc. Edinburgh Sect. A, 92 (1982), pp.~343--364.

\bibitem{H}
{\sc L.~H\"ormander}, {\em Linear partial differential operators},
  Springer-Verlag, Berlin, 1976.

\bibitem{L2}
{\sc G.~{\L}ysik}, {\em Mean-value properties of real analytic functions},
  Arch. Math. (Basel), 98 (2012), pp.~61--70.

\bibitem{Mic5}
{\sc S.~Michalik}, {\em On the multisummability of divergent solutions of
  linear partial differential equations with constant coefficients}, J.
  Differential Equations, 249 (2010), pp.~551--570.

\bibitem{Mic9}
\leavevmode\vrule height 2pt depth -1.6pt width 23pt, {\em On {B}orel summable
  solutions of the multidimensional heat equation}, Ann. Polon. Math., 105
  (2012), pp.~167--177.

\bibitem{Mic10}
\leavevmode\vrule height 2pt depth -1.6pt width 23pt, {\em Corrigendum to "{O}n
  the multisummability of divergent solutions of linear partial differential
  equations with constant coefficients" [{J}. {D}ifferential {E}quations 249
  (2010), 551--570]}, J. Differential Equations, 255 (2013), pp.~2400--2401.

\bibitem{N}
{\sc M.~Nicolesco}, {\em Les Fonctions Polyharmoniques}, Hermann, Paris, 1936.

\bibitem{P-L}
{\sc E.~Phragm\'en and E.~Lindel\"of}, {\em Sur une extension d'un principe
  classique de l'analyse et sur quelques propri\'et\'es des fonctions
  monog\`enes dans le voisinage d'un point singulier}, Acta Math., 31 (1908),
  pp.~381--406.

\bibitem{P}
{\sc P.~Pizzetti}, {\em Sulla media dei valori che una funzione dei punti dello
  spazio assume alla superficie di una sfera}, Rendiconti Lincei, serie 5, 18
  (1909), pp.~182--185.

\bibitem{Po}
{\sc A.~V. Pokrovskii}, {\em Mean value theorems for solutions of linear
  partial differential equations}, Math. Notes, 64 (1998), pp.~220--229.

\bibitem{To}
{\sc A.~Toma}, {\em Generating functions for the mean value of a function on a
  sphere and its associated ball in {${\bf R}^n$}}, J. Inequal. Appl.,  (2008),
  pp.~Art. ID 656329, 14.

\bibitem{Z}
{\sc L.~Zalcman}, {\em Mean values and differential equations}, Israel J.
  Math., 14 (1973), pp.~339--352.

\end{thebibliography}
    \end{document}